\newtheorem{theorem}{Theorem}
\newtheorem{lem}{Lemma}
\newtheorem{corollary}{Corollary}
\newtheorem{thm}{Theorem}
\theoremstyle{definition}
\theoremstyle{remark}
\newtheorem{rem}{Remark}
\newtheorem{remark}{Remark}
\DeclareMathOperator{\mt}{mt}
\patchcmd{\@settitle}{\uppercasenonmath\@title}{}{}{}
\patchcmd{\@setauthors}{\MakeUppercase}{}{}{}
\patchcmd{\section}{\scshape}{\bfseries}{}{}
\begin{document}
\title{Sampling in de Branges Spaces and Naimark Dilation}%
\author{Sa'ud al-Sa'di}
\author{Eric S. Weber}
\address{Department of Mathematics, Hashemite University, Zarqa, Jordan}
\email{saud@hu.edu.jo}
\address{Department of Mathematics, Iowa State University, 396 Carver Hall, Ames, IA 50011}
\email{esweber@iastate.edu}

\subjclass[2000]{Primary: 94A20; Secondary 30D10, 47A20}
\date{\today}


%



\begin{abstract}

We consider the problem of sampling in de Branges spaces and develop some necessary conditions and some sufficient conditions for sampling sequences, which generalize some well-known sampling results in the Paley-Wiener space. These conditions are obtained by identifying the main construction with Naimark dilation of frames--embedding the de Branges space into a larger de Branges space while embedding the kernel functions associated with a sampling sequence into a Riesz basis  for the larger space.
\end{abstract}
\maketitle

\section{Introduction}

The problem of sampling in the Paley-Wiener space of bandlimited functions began with the Shannon-Whitaker-Kotelnikov theorem (see \cite{BF01a} for a history), which says that if a function $f$ is in $L^{2}(\mathbb{R})$ and band limited to $(-\pi,\pi)$, then it can be recovered from its samples $\{ f(n) \}_{n}$ on the integer lattice via cardinal interpolation.  Later, Duffin and Schaeffer \cite{DS52a}, Landau \cite{L67a}, and others gave necessary and (different) sufficient conditions for a sequence $\{ \lambda_{n} \}_{n}$ such that $f$ can be recovered from its samples $\{ f(\lambda_{n}) \}_{n}$.  A complete description of the sampling sequences for the Paley-Wiener space was given by Ortega-Cerda and Seip \cite{OS02a}.

Using the theory of de Branges spaces, Ortega-Cerda and Seip characterize the sampling sequences in essentially the following way (please see Theorem \ref{thm:ocs} for the precise statement):  the sequence is a sampling sequence if and only if the Paley-Wiener space can be embedded into a larger space in such a way that the sequence becomes a complete interpolating sequence.  

The purpose of the present paper is to demonstrate that when this description is viewed from the perspective of frame theory, it says that the kernel functions associated to $\{ \lambda_{n} \}_{n}$ form a frame if and only if the Paley-Wiener space can be embedded into a larger space in such a way that the kernel functions can be ``dilated'' to a Riesz basis for the larger space.  This process of embedding a frame into a Riesz basis is referred to as Naimark dilation \cite{HL00a}.

We will demonstrate that the characterization of the sampling sequences given by Ortega-Cerda and Seip does in fact correspond to Naimark dilation.  We will also show that for other de Branges spaces, the same Naimark dilation phenomenon holds to describe the sampling sequences in those spaces.

\subsection{Frame Theory}

A sequence $\{ f_{n};n\in I\}$ is a frame for a separable Hilbert space $\mathcal{H}$ if there exists constants $0<A\leq
B<\infty$ such that
\begin{equation}\label{eqn:framecondition}
 A \|f\|^{2}\leq\sum_{n\in I}|\langle f,f_{n}\rangle|^{2}\leq B
\|f\|^{2}, \;\; \text{ for all}\;\; f \in \mathcal{H},
\end{equation}

The constants $A$ and $B$ are called lower and upper frame bounds, respectively. The frames for which $A=B=1$ are called \emph{Parseval frames}. A frame which is a basis is called a \emph{Riesz basis}. It is easy to see that a Parseval frame $\{ f_{n};n\in I\}$ for a Hilbert space $\mathcal{H}$ is an orthonormal basis if and only if each $f_{n}$ is a unit vector. If  $\{ f_{n};n\in I\}$ satisfies the second inequality, then $\{ f_{n};n\in I\}$ is called a \emph{Bessel} sequence.

Let $\{f_{n}\}_{n\in I}$ be a Bessel sequence in $\mathcal{H}$. The analysis operator $\Theta: \mathcal{H}\to \ell^{2}(I)$, which is bounded because of (\ref{eqn:framecondition}), is defined by
\[ \Theta: f\to (\left< f,f_{n}\right>);\]
and the synthesis operator $\Theta^{*}:\ell^{2}(I) \to \mathcal{H}$, which is  the adjoint operator of $\Theta$, is defined by
\[\Theta^{*}: (c_{n})_{n\in I}\to \sum_{n\in I}c_{n}f_{n}.\]
Additionally, the sum
$\sum_{n\in I}c_{n}f_{n}$ converges in $\mathcal{H}$ for all $(c_{n})_{n\in I}\in \emph{l}^{2}(I)$ (see \cite{DS52a}), and so the synthesis operator is also well defined and bounded.

 The operator $S:=\Theta^{*}\Theta:\mathcal{H}\to \mathcal{H}$ is called the frame operator, and we have
     \[ Sf=\sum_{n\in I}\langle f,f_{n} \rangle f_{n}\;, \forall f\in \mathcal{H}.\]

 The \emph{canonical dual frame} is denoted by $\{\tilde{f}_{n}\}_{n\in I}$, and is defined by $\tilde{f}_{n}=S^{-1}f_{n}$. Furthermore, for each $f\in \mathcal{H}$ we have the \emph{frame expansions}
\begin{equation}
f=\sum_{n\in I}\langle f,f_{n} \rangle \tilde{f}_{n}= \sum_{n\in I}\langle f,\tilde{f}_{n} \rangle f_{n},
\end{equation}
with unconditional convergence of these series.

If $\mathbb{F}=\{f_{n}\}_{n\in I}$ and $\mathbb{G}=\{g_{n}\}_{n\in I}$ are two Bessel sequences in $\mathcal{H}$, define the operator \[ \Theta^{*}_{\mathbb{G}}\Theta_{\mathbb{F}}:\mathcal{H} \to \mathcal{H}: f\to \sum_{n\in I}\left< f,f_{n}\right>g_{n}.\]

If $\Theta^{*}_{\mathbb{G}}\Theta_{\mathbb{F}}=0$ then the two Bessel sequences $\mathbb{F}$ and $\mathbb{G}$ are said to be orthogonal \cite{HKLW07a}. An extensive study of orthogonal frames can be found in the papers \cite{BJW07a,W04a}. If $\mathbb{F}$ and $\mathbb{G}$ are both Parseval frames and orthogonal to each other, then for any $f,g\in \mathcal{H}$
\[  f= \sum_{n}(\left< f,f_{n}\right>+\left< g,g_{n}\right>)f_{n}, \;\; \text{and} \;\; g= \sum_{n}(\left< f,f_{n}\right>+\left< g,g_{n}\right>)g_{n}\]
In other words,  both functions can be recovered from the summed coefficients $\left< f,f_{n}\right>+\left< g,g_{n}\right>$. This procedure is called \emph{multiplexing}, and can be used in multiple access communication systems. In the proof of our main results we also need a concept of \emph{similar frames}: two frames $\mathbb{F}=\{f_{n}\}_{n\in I}$ and $\mathbb{G}=\{g_{n}\}_{n\in I}$ are said to be similar if there is an invertible operator $T:\mathcal{H} \to \mathcal{H}$ such that $Tf_{n}=g_{n}$.  Two frames $\mathbb{F}$ and $\mathbb{G}$ are similar if and only if $\Theta_{\mathbb{F}}(H) = \Theta_{\mathbb{G}}(H)$ \cite{Cas00a}.

Let $P$ be an orthogonal projection from a Hilbert space $\mathcal{K}$ onto a closed subspace $\mathcal{H}$, and $\{f_{n}\}$ be a sequence in $\mathcal{K}$. Then $\{Pf_{n}\}$ is called \emph{orthogonal compression} of $\{f_{n}\}$ under $P$, and $\{f_{n}\}$ is called an \emph{orthogonal dilation} of $\{Pf_{n}\}$.  A classical fact on dilation of frames,  which can be attributed to Han and Larson \cite{HL00a},  says that a Parseval frame in a Hilbert space $\mathcal{H}$ is an image of an orthonormal basis under an orthogonal projection of some larger Hilbert space $\mathcal{K}\supseteq  \mathcal{H}$ onto $\mathcal{H}$. This result  can be considered as a special case of Naimark's dilation theorem for positive operator valued measures, see \cite{Naimark40,Naimark43}. In particular, Han and Larson proved the following result.

\begin{thm} \label{thm:Han&Larson-Theorem}
Let $\{f_{n}\}_{n\in I}$ be a sequence in a Hilbert space $\mathcal{H}$. Then
\begin{itemize}
\item [($i$)] $\{f_{n}\}$ is a Parseval frame for $\mathcal{H}$ if and only if there exists a Hilbert space $\mathcal{K} \supseteq \mathcal{H}$ and an orthonormal basis $\{e_{n}\}$ for $\mathcal{K}$ such that if $P$ is the orthogonal projection of $\mathcal{K}$ onto $\mathcal{H}$ then $f_{n}=Pe_{n}$, for all $n\in I$.

\item [($ii$)] $\{f_{n}\}$ is a frame for $\mathcal{H}$ if and only if there exists a Hilbert space $\mathcal{K} \supseteq \mathcal{H}$ and a Riesz basis $\{u_{n}\}$ for $\mathcal{K}$ such that if $P$ is the orthogonal projection of $\mathcal{K}$ onto $\mathcal{H}$ then $f_{n}=Pu_{n}$, for all $n\in I$.

\end{itemize}
\end{thm}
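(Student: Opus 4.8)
The plan is to realize the dilation space concretely as $\ell^2(I)$ via the analysis operator, dispatch the Parseval case (i) first, and then bootstrap to the general frame case (ii) using the canonical Parseval frame $\{S^{-1/2}f_n\}$ together with an invertible extension of $S^{1/2}$.

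For (i), in the forward direction I would assume $\{f_n\}$ is a Parseval frame, so that the frame inequality with $A=B=1$ says exactly that the analysis operator $\Theta$ is an isometry of $\mathcal{H}$ into $\mathcal{K}:=\ell^2(I)$; I then identify $\mathcal{H}$ with the closed subspace $\Theta(\mathcal{H})$. Since $\Theta$ is an isometry, $\Theta^{*}\Theta=I_{\mathcal{H}}$, so $P:=\Theta\Theta^{*}$ is a self-adjoint idempotent, hence the orthogonal projection of $\mathcal{K}$ onto $\Theta(\mathcal{H})$. Taking $\{e_n\}$ to be the canonical orthonormal basis of $\ell^2(I)$, the definition of the synthesis operator gives $\Theta^{*}e_n=f_n$, and therefore $Pe_n=\Theta\Theta^{*}e_n=\Theta f_n$, which is precisely $f_n$ after the identification. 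In the converse direction, if $f_n=Pe_n$ for an orthonormal basis $\{e_n\}$ of some $\mathcal{K}\supseteq\mathcal{H}$, then for $f\in\mathcal{H}$ one has $\langle f,f_n\rangle=\langle f,Pe_n\rangle=\langle Pf,e_n\rangle=\langle f,e_n\rangle$, whence $\sum_{n}|\langle f,f_n\rangle|^{2}=\sum_{n}|\langle f,e_n\rangle|^{2}=\|f\|^{2}$ by Parseval's identity for $\{e_n\}$ applied to $f\in\mathcal{H}\subseteq\mathcal{K}$.

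For (ii), in the forward direction let $S$ be the frame operator, which is bounded, positive and invertible; then $\{S^{-1/2}f_n\}$ is a Parseval frame for $\mathcal{H}$, as one checks by noting that its frame operator is $S^{-1/2}SS^{-1/2}=I$. Applying (i) produces $\mathcal{K}\supseteq\mathcal{H}$ and an orthonormal basis $\{e_n\}$ with $S^{-1/2}f_n=Pe_n$, where $P$ is the orthogonal projection of $\mathcal{K}$ onto $\mathcal{H}$. Now define $\tilde{T}$ on $\mathcal{K}=\mathcal{H}\oplus(\mathcal{K}\ominus\mathcal{H})$ by $\tilde{T}=S^{1/2}\oplus I$; this is bounded and invertible since $S^{1/2}$ is. Setting $u_n:=\tilde{T}e_n$, the sequence $\{u_n\}$ is a Riesz basis for $\mathcal{K}$ as the image of an orthonormal basis under an invertible operator, and decomposing $e_n=Pe_n+(I-P)e_n$ and using that $\tilde{T}$ fixes $(I-P)e_n$ while acting as $S^{1/2}$ on $Pe_n$ gives $Pu_n=P\tilde{T}e_n=S^{1/2}(Pe_n)=S^{1/2}S^{-1/2}f_n=f_n$. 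Conversely, if $f_n=Pu_n$ for a Riesz basis $\{u_n\}$ of some $\mathcal{K}\supseteq\mathcal{H}$, then $\{u_n\}$ is in particular a frame for $\mathcal{K}$ with bounds $A,B$, and for $f\in\mathcal{H}$ the identity $\langle f,f_n\rangle=\langle f,Pu_n\rangle=\langle f,u_n\rangle$ yields $A\|f\|^{2}\leq\sum_{n}|\langle f,f_n\rangle|^{2}=\sum_{n}|\langle f,u_n\rangle|^{2}\leq B\|f\|^{2}$, so $\{f_n\}$ is a frame for $\mathcal{H}$.

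The operator identities for $\Theta,\Theta^{*}$ and the invocations of Parseval's identity are routine. The one place that genuinely requires care — and the main obstacle in writing this cleanly — is the bookkeeping around the identification of $\mathcal{H}$ with $\Theta(\mathcal{H})\subseteq\ell^2(I)$ in part (i): one must be sure that $\Theta\Theta^{*}$ really is the orthogonal projection and that the statement $Pe_n=f_n$ is literally correct under the identification. Likewise in part (ii) one must take the orthogonal complement $\mathcal{K}\ominus\mathcal{H}$ inside the specific dilation space produced by (i), so that $\tilde{T}=S^{1/2}\oplus I$ is genuinely defined on all of $\mathcal{K}$ and its compression to $\mathcal{H}$ is exactly $S^{1/2}$.
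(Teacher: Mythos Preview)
Your proof is correct and is essentially the standard argument for this result. Note, however, that the paper does not actually prove this theorem: it is stated as a result of Han and Larson and cited to \cite{HL00a} without proof, so there is no ``paper's own proof'' to compare against. Your construction via the analysis operator into $\ell^2(I)$ for part~(i), followed by the $S^{1/2}\oplus I$ extension for part~(ii), is exactly the approach taken in the original Han--Larson memoir, and all the steps you outline go through as written.
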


Orthogonality of frames and Naimark dilation of frames are related in the following way \cite{HL00a,B99a}.  If $\{u_n\}$ is a Riesz basis for $\mathcal{K}$ and $P$ is the projection onto $\mathcal{H} \subset \mathcal{K}$, then $\{ P u_{n} \}$ and $\{ (I-P) u_{n} \}$ are orthogonal frames for $\mathcal{H}$ and $\mathcal{H}^{\perp}$, respectively.  Conversely, if $\mathbb{F} = \{ f_{n} \}$ and $\mathbb{G} = \{ g_{n} \}$ are orthogonal frames for $\mathcal{H}_{1}$ and $\mathcal{H}_{2}$, respectively, then $\{ f_{n} + g_{n} \}$ is a frame for $\mathcal{H}_{1} \oplus \mathcal{H}_{2}$.  Note that the sum of the frames need not be a basis for the direct sum in general--however, it will be provided 
\[ \Theta_{\mathbb{F}}(\mathcal{H}_{1}) \oplus \Theta_{\mathbb{G}}(\mathcal{H}_{2}) = \ell^2(I). \]

\subsection{de Branges Spaces}

We are interested in Hilbert spaces of entire functions as first introduced by L. de Branges in the series of papers \cite{dB59, dB60a, dB61a, dB61b}. These spaces, which are now called de Branges spaces, generalize the classical Paley-Wiener space which consists of the entire functions which are of exponential type $\pi$ and square integrable on the real line.

An entire function $E(z)$ is said to be of\emph{ Hermite$-$Biehler} class, denoted by $\mathcal{HB}$, if
it satisfies the condition
\begin{equation}\label{eq:deBranges-Function-definition}
|E(\bar{z})| <|E(z)|,
\end{equation}
for all $z\in \mathbb{C}^{+}=\{z\in \mathbb{C}\,:\, \text{Im} \,z>0\}$. An analytic function $f$ on  $\mathbb{C}^{+}$ is said to be of \emph{bounded type} in $\mathbb{C}^{+}$ if it can be represented as a quotient of two bounded analytic functions in $\mathbb{C}^{+}$.  The \emph{mean type} of $f$ in $\mathbb{C}^{+}$ is defined by
\begin{equation}\label{eqn:Mean Type formula}
 \mt_{+}(f)\,:=\limsup_{y\rightarrow +\infty}\frac{\log|f(iy)|}{y}
 \end{equation}

For an entire function $f$ define the function $f^{*}$ as $f^{*}(z):=\overline{f(\bar{z})}$. Given a function $E\in \mathcal{HB}$, the de Branges space $\mathcal{H}(E)$ consists of all entire functions $f(z)$ such that
\begin{equation}\label{eq:NormofDeBrangesSpace}
  ||f||^{2}_{_{E}}:=\int_{\mathbb{R}}\bigg|\frac{f(t)}{E(t)}\bigg|^{2}dt < \infty,
\end{equation}
  and
  $f(z)/E(z)$ and $f^{*}(z)/E(z)$ are of bounded type and nonpositive mean type in the upper half-plane. It is a Hilbert space with inner product defined by
\[ \langle f,g \rangle_{E}=\int_{\mathbb{R}}\frac{f(t)\overline{g(t)}}{|E(t)|^{2}}\,dt.\]
If $E,F \in \mathcal{HB}$, we write $H(E) = H(F)$ if they coincide as sets and the norms are equivalent.

\begin{thm} \label{characterizationOfH(E)inH1H2H3}
The space $\mathcal{H}(E)$ satisfies the following properties:
\begin{enumerate}
\item [(H1)] If $f \in \mathcal{H}(E)$ and $w\in \mathbb{C} \setminus \mathbb{R}$ with $f(w)=0$, then the function
 $g(z)=f(z)\frac{z-\bar{w}}{z-w}$ belongs to $ \mathcal{H}(E)$, and $\|g\|=\|f\|$.
\item [(H2)] For every nonreal number $w$, the linear functional defined on the space by $f \mapsto f(w)$ is continuous. 
\item [(H3)]  If  $f \in \mathcal{H}(E)$, then $ f^{*} \in \mathcal{H}$ and $ \|\,f^{*}\|=\|\,f\|$.
\end{enumerate}
\end{thm}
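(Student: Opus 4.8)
The plan is to treat the three properties in increasing order of difficulty: (H3) is essentially a tautology, (H1) is an elementary computation with a single Blaschke-type factor, and (H2) is the only part requiring genuine input from Hardy space theory, so that is where I expect the main obstacle to lie. Property (H3) follows at once from the definition of $\mathcal{H}(E)$: if $f\in\mathcal{H}(E)$ then $f^{*}$ is again entire, and since $|f^{*}(t)|=|f(t)|$ for $t\in\mathbb{R}$ we get $\|f^{*}\|_{E}^{2}=\int_{\mathbb{R}}|f^{*}(t)/E(t)|^{2}\,dt=\|f\|_{E}^{2}<\infty$; moreover the two functions $f^{*}/E$ and $(f^{*})^{*}/E=f/E$ that the definition requires to be of bounded type and of nonpositive mean type in $\mathbb{C}^{+}$ are precisely the two functions already known to have that property because $f\in\mathcal{H}(E)$. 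Hence $f^{*}\in\mathcal{H}(E)$ with $\|f^{*}\|=\|f\|$.

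For (H1), fix a nonreal $w$ with $f(w)=0$, put $b_{w}(z)=(z-\bar w)/(z-w)$, and set $g=b_{w}f$. The zero of $f$ at $w$ cancels the simple pole of $b_{w}$ there, so $g$ is entire. Since $|t-\bar w|=|t-w|$ for real $t$, we have $|b_{w}(t)|=1$, hence $|g(t)|=|f(t)|$ on $\mathbb{R}$ and $\int_{\mathbb{R}}|g(t)/E(t)|^{2}\,dt=\|f\|_{E}^{2}$; once $g\in\mathcal{H}(E)$ is known this gives $\|g\|=\|f\|$. To see $g\in\mathcal{H}(E)$, observe that $b_{w}$ is the quotient of $(z-\bar w)/(z+i)$ and $(z-w)/(z+i)$, both bounded and analytic in $\mathbb{C}^{+}$, so $b_{w}$ is of bounded type there, and $\mt_{+}(b_{w})=\limsup_{y\to\infty}y^{-1}(\log|iy-\bar w|-\log|iy-w|)=0$, with the same computation for $b_{w}^{*}=1/b_{w}$. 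Since $g/E=b_{w}\cdot(f/E)$ and $g^{*}/E=b_{w}^{-1}\cdot(f^{*}/E)$, and products and quotients of functions of bounded type in $\mathbb{C}^{+}$ are again of bounded type there with additive mean types, both $g/E$ and $g^{*}/E$ are of bounded type with mean type at most $\mt_{+}(f/E)\le0$, respectively $\mt_{+}(f^{*}/E)\le0$. Thus $g\in\mathcal{H}(E)$.

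Property (H2) is equivalent to the statement that $\mathcal{H}(E)$ is a reproducing kernel Hilbert space, and the natural route is to exhibit the candidate kernel
\[
K_{w}(z)=\frac{\overline{E(w)}\,E(z)-\overline{E^{*}(w)}\,E^{*}(z)}{2\pi i(\bar w-z)},
\]
whose numerator vanishes at $z=\bar w$ so that $K_{w}$ is entire (for $E(z)=e^{-i\pi z}$ this reduces to the Paley--Wiener sinc kernel), then check $K_{w}\in\mathcal{H}(E)$ and verify $\langle f,K_{w}\rangle_{E}=f(w)$ for all $f\in\mathcal{H}(E)$. Both verifications rest on the classical fact that the defining conditions on $f$ force $f/E$ and $f^{*}/E$ into the Hardy space $H^{2}(\mathbb{C}^{+})$; granting that, one evaluates $\langle f,K_{w}\rangle_{E}=\int_{\mathbb{R}}(f/E)(t)\,\overline{(K_{w}/E)(t)}\,dt$ by a Cauchy/residue computation to obtain $f(w)$, and one reads off the a priori bound $|f(w)|\le|E(w)|(4\pi\,\mathrm{Im}\,w)^{-1/2}\|f\|_{E}$ for $\mathrm{Im}\,w>0$ from the Cauchy-integral point-evaluation bound in $H^{2}(\mathbb{C}^{+})$ together with the fact that $E$ has no zeros in $\mathbb{C}^{+}$; the case $\mathrm{Im}\,w<0$ then follows by applying this to $f^{*}$ via $f(w)=\overline{f^{*}(\bar w)}$ and (H3).

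The main obstacle is precisely this Hardy space embedding. One cannot obtain it from ``bounded type plus nonpositive mean type plus $L^{2}$ boundary values'' in isolation: the function $\frac{1}{z+i}\,e^{ic/z}$ with $c>0$ has all three of those properties in $\mathbb{C}^{+}$ yet is not in $H^{2}(\mathbb{C}^{+})$, since it has an essential singularity on $\mathbb{R}$. So the argument must genuinely exploit the interplay between the conditions on $f/E$ and on $f^{*}/E$ together with the entirety of $f$, through the Smirnov-class factorization of functions of bounded type in a half-plane. Rather than reprove this, I would cite de Branges' monograph, where it is established that $\mathcal{H}(E)$ satisfies (H1)--(H3), together with the standard Hardy/Smirnov class literature for the embedding, and devote the written proof to (H1) and (H3) and to assembling the kernel identity from those ingredients.
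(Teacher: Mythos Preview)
The paper does not supply a proof of this theorem at all: it is stated as a classical property of de~Branges spaces, with the converse (Theorem~\ref{thm:H1-H3 de Branges theorem}) attributed to \cite{dB68a}, and the paper immediately moves on. So there is no ``paper's own proof'' to compare against; your proposal goes strictly beyond what the paper does.

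That said, your arguments are sound. The treatments of (H3) and (H1) are correct and complete: the symmetry in the definition under $f\leftrightarrow f^{*}$ gives (H3) immediately, and your Blaschke-factor computation for (H1) is exactly the right one (the apparent pole of $b_{w}^{-1}$ in $\mathbb{C}^{+}$ when $\operatorname{Im}w<0$ is cancelled by $f^{*}(\bar w)=\overline{f(w)}=0$, which you implicitly use). For (H2) you correctly isolate the nontrivial step---the embedding of $f/E$ and $f^{*}/E$ into $H^{2}(\mathbb{C}^{+})$---and you are right that this cannot be obtained from bounded type, nonpositive mean type, and $L^{2}$ boundary values taken separately; it genuinely requires the interplay of the two-sided conditions with entirety, via Smirnov/Nevanlinna factorization. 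Deferring to de~Branges' monograph for this is precisely what the paper does (implicitly) for the entire theorem, so your plan is consistent with the paper's level of detail, only more explicit about which parts are elementary and which are not.
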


Conversely every de Branges space can be obtained in this way, see \cite{dB68a}:
\begin{thm}\label{thm:H1-H3 de Branges theorem}
A Hilbert space $\mathcal{H}$ whose elements are entire functions, which satisfies $(H1)$, $(H2)$, and $(H3)$, and which contains a nonzero element,
is equal isometrically to some space $\mathcal{H}(E)$.
\end{thm}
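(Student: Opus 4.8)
The plan is to reconstruct a Hermite--Biehler function $E$ directly from the reproducing kernel of $\mathcal{H}$ and then show $\mathcal{H}=\mathcal{H}(E)$ isometrically. First, by $(H2)$ every evaluation functional $f\mapsto f(w)$ at a nonreal $w$ is bounded, so there is $K_{w}=K(w,\cdot)\in\mathcal{H}$ with $f(w)=\langle f,K_{w}\rangle$ for all $f\in\mathcal{H}$; being an element of $\mathcal{H}$, $K(w,\cdot)$ is entire, and $\overline{K(w,v)}=K(v,w)$. Axiom $(H3)$ exhibits $f\mapsto f^{*}$ as a conjugation (an antilinear isometric involution, so $\langle f^{*},g^{*}\rangle=\langle g,f\rangle$); chasing the identity $\overline{f(\bar w)}=f^{*}(w)=\langle f^{*},K_{w}\rangle$ through it yields $K_{\bar w}=(K_{w})^{*}$, i.e. $K(\bar w,z)=\overline{K(w,\bar z)}$. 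Finally, since $\mathcal{H}$ contains a nonzero element, $(H1)$ (applied repeatedly to divide out the zeros of that element via powers of $\tfrac{z-\bar w}{z-w}$) produces a function that does not vanish at any prescribed nonreal $w$; hence $K_{w}\neq 0$ and $K(w,w)=\|K_{w}\|^{2}>0$ for every nonreal $w$. Thus $\mathcal{H}$ is a reproducing kernel Hilbert space of entire functions with a strictly positive diagonal kernel and the symmetry above.

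The crux is to show that $\pi(z-\bar w)K(w,z)$ is a ``rank two'' expression, namely $\pi(z-\bar w)K(w,z)=\overline{A(w)}B(z)-A(z)\overline{B(w)}$ for entire functions $A=A^{*}$, $B=B^{*}$. Here $(H1)$ enters decisively: for fixed nonreal $\alpha$ the map $V_{\alpha}:f(z)\mapsto\tfrac{z-\bar\alpha}{z-\alpha}f(z)$ is, by $(H1)$, an isometry of $M_{\alpha}:=\{f\in\mathcal{H}:f(\alpha)=0\}$ onto $M_{\bar\alpha}$. Since $M_{\alpha}^{\perp}=\mathbb{C}K_{\alpha}$ and $M_{\bar\alpha}^{\perp}=\mathbb{C}K_{\bar\alpha}$, the reproducing kernel of each subspace is $K$ minus an explicit rank-one correction, and intertwining these kernels through $V_{\alpha}$ produces a functional equation relating $K(\alpha,z)$, $K(\bar\alpha,z)$ and $K(\alpha,\beta)$ as $\beta$ varies. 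Solving it --- using Hermitian symmetry, $K_{\bar w}=(K_{w})^{*}$, and a normalization at a point where the kernel does not vanish --- extracts the two entire functions $A,B$, real on $\mathbb{R}$, with the displayed identity; one then sets $E:=A-iB$, so that $E^{*}=A+iB$ and
\[ K(w,z)=\frac{\overline{E(w)}E(z)-\overline{E^{*}(w)}E^{*}(z)}{2\pi i(\bar w-z)}. \]

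It remains to identify $E$ and $\mathcal{H}(E)$. Evaluating on the diagonal gives $|E(w)|^{2}-|E^{*}(w)|^{2}=4\pi\,(\mathrm{Im}\,w)\,K(w,w)$, so for $\mathrm{Im}\,w>0$ the strict positivity $K(w,w)>0$ forces $|E(\bar w)|=|E^{*}(w)|<|E(w)|$; hence $E\in\mathcal{HB}$. A direct computation with the defining integral (using that $E^{*}/E$ is bounded by $1$, hence of bounded type and nonpositive mean type in $\mathbb{C}^{+}$) shows that the kernel function $\tfrac{\overline{E(w)}E(z)-\overline{E^{*}(w)}E^{*}(z)}{2\pi i(\bar w-z)}$ indeed lies in $\mathcal{H}(E)$ and reproduces point evaluation there, so $\mathcal{H}(E)$ has reproducing kernel $K$. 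Since a Hilbert space of functions is determined by its reproducing kernel --- both spaces are the completion of the span of $\{K_{w}\}$ under the same Gram data, and the identifying isometry respects point evaluations --- we conclude $\mathcal{H}=\mathcal{H}(E)$ with equal norms; in particular every $f\in\mathcal{H}$ automatically satisfies the bounded-type and $\mt_{+}(\,\cdot\,)\le 0$ conditions in the definition of $\mathcal{H}(E)$.

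The main obstacle is the middle step: turning the isometry $(H1)$ into the explicit bilinear form of $(z-\bar w)K(w,z)$, including the bookkeeping that shows $A$ and $B$ are entire and real on the line and the choice of normalization (with extra care when the kernel has real zeros, where one argues by a limiting/perturbation device). A secondary, genuinely function-theoretic point is the verification in the last paragraph that the proposed kernel functions lie in $\mathcal{H}(E)$, which rests on the Nevanlinna/Smirnov theory of the half-plane (boundedness of $E^{*}/E$ and Krein's theorem on functions of bounded type) rather than on Hilbert space geometry.
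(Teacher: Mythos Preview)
The paper does not prove this theorem; it is quoted with a citation to de Branges's monograph \cite{dB68a} (it is Theorem 23 there), so there is no in-paper argument to compare against. Your outline is precisely the classical de Branges proof from that source: extract the reproducing kernel and its conjugation symmetry from $(H2)$--$(H3)$, use the isometric Blaschke substitution supplied by $(H1)$ on the hyperplanes $\{f:f(\alpha)=0\}$ to force the rank-two structure $\pi(z-\bar w)K(w,z)=\overline{A(w)}B(z)-\overline{B(w)}A(z)$ with $A=A^{*}$, $B=B^{*}$ entire, set $E=A-iB$, read off $E\in\mathcal{HB}$ from strict positivity of $K(w,w)$ on $\mathbb{C}^{+}$, and identify $\mathcal{H}$ with $\mathcal{H}(E)$ via kernel uniqueness together with the Nevanlinna/Krein bounded-type machinery. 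The two places you single out as the genuine work---the bilinear extraction and the verification that elements of $\mathcal{H}$ satisfy the bounded-type/mean-type conditions defining $\mathcal{H}(E)$---are exactly where de Branges invests his effort, so your map of the difficulty is accurate. In short, your proposal simply supplies what the paper delegates to the reference.
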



By (H2), for every nonreal $w\in \mathbb{C}$ there exists a reproducing kernel for $\mathcal{H}(E)$, which is given by
\begin{equation}\label{eq:reproducingkerneldefn}
K_{E}(w,z)=\frac{\bar{E}(w)E(z)-E(\bar{w})E^{*}(z)}{2\pi i(\bar{w}-z)},
\end{equation}
whence
\begin{equation}\label{eq:reproducingkernelProperty}
f(w)=\langle f(t), K_{E}(w,t)\rangle_{E}, 
\end{equation}
for every $f\in \mathcal{H}(E)$.  The kernel $K_{E}$ can be extended so that both Equations  (\ref{eq:reproducingkerneldefn}) and (\ref{eq:reproducingkernelProperty}) are satisfied for real $w$ as well.

An important feature of the de Branges space $\mathcal{H}(E)$ is the phase function corresponding to the generating function $E$. For any entire function $E\in \mathcal{HB}$, there exists a continuous and strictly increasing function  $\varphi:\mathbb{R} \rightarrow \mathbb{R}$ such that $E(x)e^{i\varphi(x)}\in \mathbb{R}$ for all $x\in \mathbb{R}$, and $E(x)$ can be written as
\begin{equation}\label{eqn:phase function definition}
E(x)=|E(x)|e^{-i\varphi(x)}, \;\; x\in \mathbb{R}
\end{equation}

If a function $\varphi$ has these properties then it is referred to as a \emph{phase function} of $E$. It follows that a phase function of $E$ is defined uniquely up to an additive constant, a multiple of $2\pi$. If $\varphi(x)$ is any such function, and $E(x)\neq 0$, then using (\ref{eq:reproducingkerneldefn}) and (\ref{eqn:phase function definition}), an easy computation gives
\begin{equation}\label{eqn:K_{E}(x,x) as phase function derivative}
\|K_{E}(x,.)\|^{2}=K_{E}(x,x)=\frac{1}{\pi}\varphi'(x)|E(x)|^{2}.
\end{equation}

A key feature of a de Branges space is that it always has a basis consisting of reproducing kernels corresponding to real points, \cite{dB68a}.
\begin{thm}\label{thm:Orthonormal Basis in H(E) Theorem}
 Let $\mathcal{H}(E)$ be a de Branges space and  $\varphi(x)$ be a phase function associated with $E$. If $\alpha\in \mathbb{R}$, and   $\Lambda=\{\lambda_{n}\}_{n\in \mathbb{Z}}$ is a sequence of real numbers, such that $\varphi(\lambda_{n})=\alpha+\pi n$, $n\in\mathbb{Z}$, then
\begin{enumerate}
\item  The functions $\{K_{E}(\lambda_{n},z)\}_{n\in \mathbb{Z}}$ form an orthogonal set in $\mathcal{H}(E)$.

\item If $e^{i\alpha}E(z)-e^{-i\alpha}E^{*}(z) \notin \mathcal{H}(E)$, then
$\big\{\frac{K_{E}(\lambda_{n},z)}{\|K_{E}(\lambda_{n},.)\|}\big\}_{n\in \mathbb{Z}}$ is an orthonormal basis for $\mathcal{H}(E)$. Moreover, for every $f(z)\in \mathcal{H}(E)$,
\begin{equation}\label{eq:SamplingFormulawithONB}
 f(z)=\sum_{n\in\mathbb{Z}}f(\lambda_{n})\frac{K_{E}(\lambda_{n},z)}{\|K_{E}(\lambda_{n},.)\|^{2}}\, ,
\end{equation}
and
\begin{equation}\label{eq:norm using ONB}
 \|f\|^{2}=\sum_{n\in\mathbb{Z}}\bigg|\frac{f(\lambda_{n})}{E(\lambda_{n})}\bigg|^{2}\frac{\pi}{\varphi'(\lambda_{n})}.
\end{equation}


\end{enumerate}
\end{thm}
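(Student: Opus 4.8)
The plan is to extract part (1) and the geometry underlying part (2) directly from the explicit kernel formula $(\ref{eq:reproducingkerneldefn})$ and the phase relation $(\ref{eqn:phase function definition})$, organizing the completeness half of (2) around the entire function $S(z):=e^{i\alpha}E(z)-e^{-i\alpha}E^{*}(z)$, whose restriction to $\mathbb{R}$ is $S(x)=2i\,|E(x)|\sin(\alpha-\varphi(x))$; thus the $\lambda_{n}$ are exactly the real zeros of $S$, and they are simple because $\varphi'>0$.

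\emph{Part (1).} By $(\ref{eq:reproducingkernelProperty})$, $\langle K_{E}(\lambda_{m},\cdot),K_{E}(\lambda_{n},\cdot)\rangle_{E}=K_{E}(\lambda_{m},\lambda_{n})$, so it is enough to see this vanishes for $m\ne n$. Inserting the two real arguments into $(\ref{eq:reproducingkerneldefn})$ and using $E^{*}(x)=\overline{E(x)}$ on $\mathbb{R}$ gives the numerator $\overline{E(\lambda_{m})}\,E(\lambda_{n})-E(\lambda_{m})\,\overline{E(\lambda_{n})}$; since a Hermite--Biehler function has no real zeros (which is exactly what makes $\varphi$ well defined), $(\ref{eqn:phase function definition})$ with $\varphi(\lambda_{k})=\alpha+\pi k$ yields $E(\lambda_{k})=(-1)^{k}e^{-i\alpha}|E(\lambda_{k})|$, and both products in the numerator equal $(-1)^{m+n}|E(\lambda_{m})|\,|E(\lambda_{n})|$. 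For $m=n$ the kernel is evaluated on the diagonal, $\|K_{E}(\lambda_{n},\cdot)\|^{2}=K_{E}(\lambda_{n},\lambda_{n})=\tfrac1\pi\varphi'(\lambda_{n})|E(\lambda_{n})|^{2}$ by $(\ref{eqn:K_{E}(x,x) as phase function derivative})$; so the normalized kernels form an orthonormal set.

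\emph{Part (2).} The same substitution shows $K_{E}(\lambda_{n},z)$ is a nonzero constant multiple of $S(z)/(z-\lambda_{n})$, so the closed span $\mathcal{M}$ of $\{K_{E}(\lambda_{n},\cdot)\}$ equals the closed span of $\{S(z)/(z-\lambda_{n})\}$; and $S$ has no zeros off $\mathbb{R}$, since $|e^{-i\alpha}E^{*}(z)|=|E^{*}(z)|<|E(z)|=|e^{i\alpha}E(z)|$ for $z\in\mathbb{C}^{+}$ and the reverse strict inequality holds in $\mathbb{C}^{-}$. To prove $\mathcal{M}=\mathcal{H}(E)$, let $f\in\mathcal{H}(E)$ with $f\perp\mathcal{M}$, i.e.\ $f(\lambda_{n})=0$ for every $n$; the goal is $f\equiv0$. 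As $f$ vanishes on the whole (simple) zero set of $S$, the quotient $h:=f/S$ is entire and $f=hS$. Write $h=(f/E)/(S/E)$: here $f/E$ is of bounded type and of nonpositive mean type in $\mathbb{C}^{+}$ by definition of $\mathcal{H}(E)$, while $S/E=e^{i\alpha}-e^{-i\alpha}(E^{*}/E)$ is bounded by $2$ in $\mathbb{C}^{+}$ (the inner function $E^{*}/E$ cannot take the unimodular value $e^{2i\alpha}$ there), hence of bounded type and --- being zero-free and analytic across $\mathbb{R}$ --- of mean type $0$ there. Since functions of bounded type form a field, $h$ is of bounded type in $\mathbb{C}^{+}$ with $\mt_{+}(h)\le0$, and because $S^{*}=-S$ the same applies to $h^{*}$; hence $h$ is an entire function of exponential type $0$. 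Moreover, on $\mathbb{R}$,
\[
  \|f\|_{E}^{2}=\int_{\mathbb{R}}|h(t)|^{2}\,\Big|\frac{S(t)}{E(t)}\Big|^{2}\,dt=4\int_{\mathbb{R}}|h(t)|^{2}\sin^{2}(\varphi(t)-\alpha)\,dt<\infty .
\]

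The step I expect to be the main obstacle is the conclusion $h\equiv0$. The bounded-type requirements for membership in $\mathcal{H}(E)$ hold automatically for $S$, so $S\notin\mathcal{H}(E)$ means precisely $\int_{\mathbb{R}}\sin^{2}(\varphi(t)-\alpha)\,dt=\infty$. One must show that an entire function $h$ of exponential type $0$ that is square integrable against this bounded, non-integrable weight --- whose only zeros are the discrete set $\{\lambda_{n}\}$ --- must vanish: an exponential-type-zero entire function is either constant or unbounded on $\mathbb{R}$ and, by Plancherel--P\'olya-type rigidity, cannot be concentrated near a discrete set, so finiteness of the displayed integral forces $\int_{\mathbb{R}}|h|^{2}\,dt<\infty$, and an $L^{2}(\mathbb{R})$ function of exponential type $0$ is identically zero. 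Hence $f\equiv0$, $\mathcal{M}=\mathcal{H}(E)$, and the normalized kernels form an orthonormal basis; if this absorption estimate is awkward to make self-contained, it can be replaced by a direct appeal to de Branges' theorem \cite{dB68a}. Finally, with the orthonormal basis in hand, $(\ref{eq:SamplingFormulawithONB})$ and $(\ref{eq:norm using ONB})$ are immediate from the expansion $f=\sum_{n}\langle f,K_{E}(\lambda_{n},\cdot)\rangle_{E}\,K_{E}(\lambda_{n},\cdot)/\|K_{E}(\lambda_{n},\cdot)\|^{2}$ and Parseval's identity, using $\|K_{E}(\lambda_{n},\cdot)\|^{2}=\tfrac1\pi\varphi'(\lambda_{n})|E(\lambda_{n})|^{2}$.
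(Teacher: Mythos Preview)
The paper does not actually prove this theorem; it is quoted from de~Branges \cite{dB68a} as background. So there is no argument in the paper to compare your approach against, and your attempt must be judged on its own.

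Part~(1) is fine. One small correction: Hermite--Biehler functions can have real zeros; the paper reduces to the zero-free case just after stating the theorem, so your computation is valid under that standing assumption, but your parenthetical justification (``which is exactly what makes $\varphi$ well defined'') is not accurate.

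Part~(2) has a real gap, and a smaller error. The smaller error: you claim $S/E$ has mean type $0$ in $\mathbb{C}^{+}$ because it is ``zero-free and analytic across $\mathbb{R}$''. That is not a valid reason --- $e^{iz}$ is zero-free, entire, and has mean type $-1$ in $\mathbb{C}^{+}$. You need $\mt_{+}(S/E)\ge 0$ to conclude $\mt_{+}(h)\le 0$ from $\mt_{+}(f/E)\le 0$, and this requires an actual argument (e.g.\ controlling $|E^{*}/E|$ along the imaginary axis).

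The main gap is the one you yourself flag: having reduced to an entire $h$ of exponential type~$0$ with $\int_{\mathbb{R}}|h(t)|^{2}\sin^{2}(\varphi(t)-\alpha)\,dt<\infty$ and $\int_{\mathbb{R}}\sin^{2}(\varphi(t)-\alpha)\,dt=\infty$, you assert that ``Plancherel--P\'olya-type rigidity'' forces $\int_{\mathbb{R}}|h|^{2}\,dt<\infty$. No such statement is proved or precisely cited, and it is not obvious: the weight vanishes exactly at the $\lambda_{n}$, and entire functions of exponential type~$0$ can be unbounded and wildly oscillatory on $\mathbb{R}$ --- they are not forced to be polynomials. Nothing you have written prevents $|h|$ from being large near each $\lambda_{n}$ in a way that keeps the weighted integral finite while the unweighted one diverges. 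Your fallback of citing \cite{dB68a} is of course legitimate, but then the proof collapses to exactly what the paper does: quote the result.
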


By a Lemma of \cite{dB59}, there is at most one real number $\alpha$ modulo $\pi$ such
that the function $e^{i\alpha}E(z)-e^{-i\alpha}E^{*}(z)$ belongs to $ \mathcal{H}(E)$.

Given a de Branges space $\mathcal{H}(E)$, the function $E$ can be factored out into a product of two entire functions:  $E=S \tilde{E}$,  where $\tilde{E}\in \mathcal{HB}$ and has no real zeros, and $S(z)$, is real for real $z$, and has only real zeros, and $\mathcal{H}(E) = S \cdot \mathcal{H}(\tilde{E})$. Hence, we can assume without loss of generality that the function $E$ has no zeros on the real axis, see Lemma 4.1.10 of \cite{aSW14a}. 

\section{Sampling Sequences in de Branges Spaces: Necessary Conditions}

We say a sequence $\Lambda=\{\lambda_{n}\}_{n\in\mathbb{Z}}$ is \emph{separated} (or $\delta$-uniformly separated) if there exists $\delta>0$, such that $\inf\limits_{n\neq m}|\lambda_{n} - \lambda_{m}|\geq \delta>0$. The constant $\delta$ is called the separation constant of $\Lambda$.

We say $\{ \lambda_{n} \}_{n} \subset \mathbb{R}$ is a sampling sequence for $\mathcal{H}(E)$ if there exist constants $A,B > 0$ such that for all $f \in \mathcal{H}(E)$
\begin{equation} \label{Eq:sampling1}
A \| f \|_{E}^{2} \leq \sum_{n} | f(\lambda_{n}) |^2 \leq B \| f \|_{E}^{2}.
\end{equation}
We say $\{ \lambda_{n} \}_{n} \subset \mathbb{R}$ is a \emph{normalized} sampling sequence for $\mathcal{H}(E)$ if there exist constants $\widetilde{A}, \widetilde{B} > 0$ such that for all $f \in \mathcal{H}(E)$
\begin{equation} \label{Eq:sampling2}
\widetilde{A} \| f \|_{E}^{2} \leq \sum_{n} \dfrac{| f(\lambda_{n}) |^2}{K_{E}(\lambda_{n}, \lambda_{n})} \leq \widetilde{B} \| f \|_{E}^{2}.
\end{equation}
Note that if $K_{E}(x,x) \simeq 1$, then the inequalities (\ref{Eq:sampling1}) and (\ref{Eq:sampling2}) are equivalent, as happens in the Paley-Wiener space.  However, the two inequalities are not equivalent in general.  We say that the sequence $\{ \lambda_{n} \}_{n} \subset \mathbb{R}$ is a Plancherel-Polya sequence, respectively normalized Plancherel-Polya sequence, if it satisfies (possibly only) the upper inequality of (\ref{Eq:sampling1}) or (\ref{Eq:sampling2}).

  The leading example of a de Branges space is the Paley-Wiener space $PW_{a}$, $a>0$, the space of entire functions which are square integrable on the real line and are of exponential type $a$.  In this case we can write $PW_{a}=\mathcal{H}(E)$, where $E(z)=\exp(-iaz)$.  Landau proved necessary density conditions for sampling sequences in the Paley-Wiener space \cite{L67a}. Landau's results were reproven by Gr\"{o}chenig and Razafinjatovo \cite{GR96a} using an argument based on the Homogeneous Approximation Property. Lyubarskii and Seip \cite{LS02a} extend Landau's necessary density criteria to de Branges spaces where the phase function satisfies  the condition  $\varphi'(x) \simeq 1$. Marzo, Nitzan, and Olsen \cite{MNO12} extend Landau's results to de Branges spaces which have the property that the measure $\varphi'(x)dx$ is a ``doubling measure".

  A complete characterization of which sequences are sampling in the Paley-Wiener $PW_{\pi}$ was obtained by Ortega-Cerd\'{a} and Seip \cite{OS02a}:

  \begin{thm} \label{thm:ocs}
  A separated sequence $\Lambda$ of real numbers is sampling for $PW_{\pi}$ if and only if there exist two entire functions $E, F \in \mathcal{HB}$ such that
  \begin{itemize}
  \item[($i$)] $H(E)=PW_{\pi}$
  \item[($ii$)] $\Lambda$ constitutes the zero sequence of $EF+E^{*}F^{*}$.
  \end{itemize}
  \end{thm}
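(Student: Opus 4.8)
The plan is to run both implications through the following reformulation: by \eqref{eq:reproducingkernelProperty} and the normalization $K_{E}(x,x)=1$ for $PW_{\pi}=\mathcal{H}(e^{-i\pi z})$ (so that \eqref{Eq:sampling1} and \eqref{Eq:sampling2} coincide), $\Lambda$ is a sampling sequence for $PW_{\pi}$ if and only if the reproducing kernels $\set{K_{E}(\lambda_{n},\cdot)}_{n}$ form a frame for $\mathcal{H}(E)$, for one—hence every, since $H(E)=PW_{\pi}$ forces $\norm{\cdot}_{E}\simeq\norm{\cdot}_{PW_{\pi}}$ and $K_{E}(x,x)\simeq 1$—generating function $E$ with $H(E)=PW_{\pi}$. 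Thus the theorem becomes: $\set{K_{E}(\lambda_{n},\cdot)}$ is a frame for $\mathcal{H}(E)$, $H(E)=PW_{\pi}$, if and only if there is $F\in\mathcal{HB}$ so that the normalized kernels at $\Lambda$ in $\mathcal{H}(EF)$ dilate these kernels to a Riesz basis.

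\emph{Sufficiency.} Given $E,F\in\mathcal{HB}$ as in $(i)$--$(ii)$, set $G:=EF\in\mathcal{HB}$. Multiplication $M_{F}\colon f\mapsto fF$ embeds $\mathcal{H}(E)$ isometrically into $\mathcal{H}(G)$, and a direct computation from \eqref{eq:reproducingkernelProperty} gives $P\,K_{G}(\lambda_{n},\cdot)=\overline{F(\lambda_{n})}\,M_{F}K_{E}(\lambda_{n},\cdot)$, where $P$ is the orthogonal projection of $\mathcal{H}(G)$ onto $M_{F}(\mathcal{H}(E))$. Since $EF+E^{*}F^{*}=-i\bigl(e^{i\pi/2}G-e^{-i\pi/2}G^{*}\bigr)$, its real zero set is precisely $\set{x:\varphi_{G}(x)\in\tfrac{\pi}{2}+\pi\bbz}$; hence, by Theorem~\ref{thm:Orthonormal Basis in H(E) Theorem}, the normalized kernels $\set{K_{G}(\lambda_{n},\cdot)/\norm{K_{G}(\lambda_{n},\cdot)}}$ are an orthonormal basis for $\mathcal{H}(G)$. (The exceptional case $EF+E^{*}F^{*}\in\mathcal{H}(G)$, where these kernels span only a codimension-one subspace, must be handled separately, by checking that $M_{F}(\mathcal{H}(E))$ still lies in that subspace.) Applying Theorem~\ref{thm:Han&Larson-Theorem}$(i)$, the projection of this orthonormal basis onto $M_{F}(\mathcal{H}(E))$ is a Parseval frame; transporting it back through the isometry $M_{F}$ yields a Parseval frame $\set{c_{n}K_{E}(\lambda_{n},\cdot)}$ for $\mathcal{H}(E)$ with $c_{n}=\overline{F(\lambda_{n})}/\norm{K_{G}(\lambda_{n},\cdot)}$. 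Using $\varphi_{G}=\varphi_{E}+\varphi_{F}$ together with $K_{E}(x,x)=\tfrac{1}{\pi}\varphi_{E}'(x)\abs{E(x)}^{2}$, one finds $\abs{c_{n}}^{2}K_{E}(\lambda_{n},\lambda_{n})=\varphi_{E}'(\lambda_{n})/\bigl(\varphi_{E}'(\lambda_{n})+\varphi_{F}'(\lambda_{n})\bigr)\le 1$, so the Parseval identity $\norm{f}_{E}^{2}=\sum_{n}\abs{c_{n}}^{2}\abs{f(\lambda_{n})}^{2}$ together with $K_{E}(\lambda_{n},\lambda_{n})\simeq 1$ gives the lower inequality of \eqref{Eq:sampling1}; the upper inequality is the Plancherel--P\'olya inequality, which holds because $\Lambda$ is separated. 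Hence $\Lambda$ is sampling for $PW_{\pi}$.

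\emph{Necessity.} Assume $\Lambda$ is separated and sampling for $PW_{\pi}$, so $\set{K_{E_{0}}(\lambda_{n},\cdot)}$ is a frame for $PW_{\pi}=\mathcal{H}(E_{0})$, $E_{0}(z)=e^{-i\pi z}$. By the dilation theorem (Theorem~\ref{thm:Han&Larson-Theorem}$(ii)$) there are a Hilbert space $\mathcal{K}\supseteq PW_{\pi}$ and a Riesz basis $\set{u_{n}}$ of $\mathcal{K}$ with $Pu_{n}=K_{E_{0}}(\lambda_{n},\cdot)$. The task is to realize an appropriate such dilation \emph{inside} de Branges theory: to produce $E,F\in\mathcal{HB}$ with $H(E)=PW_{\pi}$ so that, with $G:=EF$, the space $\mathcal{K}$ is (isometrically) $\mathcal{H}(G)$, the embedded $PW_{\pi}$ is $M_{F}(\mathcal{H}(E))$, and $u_{n}=K_{G}(\lambda_{n},\cdot)/\norm{K_{G}(\lambda_{n},\cdot)}$. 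I would construct $G$ directly from its phase: using that $\Lambda$ is increasing, choose a phase function $\psi$ with $\psi(\lambda_{n})=\tfrac{\pi}{2}+\pi n$; this determines a $G\in\mathcal{HB}$ having $\Lambda$ as the real zero set of $G+G^{*}$, and the separation of $\Lambda$ together with the lower inequality in \eqref{Eq:sampling1} (which, as above, forces bounded gaps and lower Beurling density at least $1$) are what let one arrange, after a suitable choice of $E$ with $H(E)=PW_{\pi}$, that $\psi-\varphi_{E}$ is again a phase function (so $F:=G/E\in\mathcal{HB}$) and that $M_{F}(\mathcal{H}(E))\subseteq\mathcal{H}(G)$ with the projections of the kernel basis recovering the given frame. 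Once $G=EF$ is obtained, $(i)$ is immediate and $(ii)$ follows because, by Theorem~\ref{thm:Orthonormal Basis in H(E) Theorem}, the normalized kernels at $\Lambda$ form an orthonormal basis of $\mathcal{H}(G)$, equivalently $\varphi_{G}(\lambda_{n})=\tfrac{\pi}{2}+\pi n$, equivalently $\Lambda$ is the zero set of $G+G^{*}=EF+E^{*}F^{*}$.

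\emph{Main obstacle.} Sufficiency is essentially bookkeeping around Naimark dilation once the phase-function/kernel dictionary is in place, the only delicate point being the degenerate case $EF+E^{*}F^{*}\in\mathcal{H}(EF)$. The substantive step is the construction in the necessity direction: the abstract dilation theorem yields only \emph{some} Hilbert space, and one must show it can be taken to be a de Branges space with exactly the right generating function—this is where the separation hypothesis and the quantitative content of the sampling inequality are used, and it is the step I expect to be the hardest.
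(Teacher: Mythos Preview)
Your sufficiency argument is essentially the paper's: the embedding $\mathcal{I}\colon f\mapsto fF$, the identification of $\Lambda$ with $\{\varphi_{EF}=\tfrac{\pi}{2}+\pi\mathbb{Z}\}$, the projection of the orthonormal kernel basis in $\mathcal{H}(EF)$ to a Parseval frame in $\mathcal{H}(E)$, and the computation $|c_{n}|^{2}K_{E}(\lambda_{n},\lambda_{n})=\varphi_{E}'/(\varphi_{E}'+\varphi_{F}')$ are exactly Theorem~\ref{T:PF1} and the proof of Theorem~\ref{Th:sufficiency}, with the Plancherel--P\'olya inequality supplying the upper bound in $PW_{\pi}$ as noted in the final remark of the paper. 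That half is fine.

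The necessity half, however, has a genuine gap. You invoke the abstract Naimark dilation (Theorem~\ref{thm:Han&Larson-Theorem}) and then propose to ``realize'' the resulting Hilbert space $\mathcal{K}$ as a de~Branges space by \emph{choosing a phase function $\psi$ with $\psi(\lambda_{n})=\tfrac{\pi}{2}+\pi n$}. This does not work: a phase function alone does not determine a Hermite--Biehler function (you also need $|G|$ on $\mathbb{R}$), and even granting some $G$ with the right zero set for $G+G^{*}$, nothing in your argument forces a factorization $G=EF$ with $E,F\in\mathcal{HB}$ and $H(E)=PW_{\pi}$. The abstract dilation theorem produces only an anonymous Hilbert space; it carries no complex-analytic data, and you give no mechanism for transferring the Riesz-basis structure into de~Branges theory. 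You correctly flag this as ``the hardest step,'' but what you have written is a restatement of the goal rather than a construction.

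The paper (following Ortega-Cerd\`a--Seip) does something entirely different and does not pass through abstract Naimark dilation at all. First, $E$ is produced by de~Branges's structure theorem (Theorem~\ref{thm:H1-H3 de Branges theorem}): since $\Lambda$ is sampling, $\bigl(\sum_{n}|f(\lambda_{n})|^{2}\bigr)^{1/2}$ is an equivalent norm on $\mathcal{H}(E_{0})$ under which (H1)--(H3) still hold, so this renormed space \emph{is} some $\mathcal{H}(E)$ isometrically, giving $(i)$ and the identity $\sum_{n}|f(\lambda_{n})|^{2}=\|f\|_{E}^{2}$. Second, $F$ is extracted from the discrete measure $\mu=\sum_{n}|E(\lambda_{n})|^{2}\delta_{\lambda_{n}}$ via a Herglotz-type representation (Lemma~\ref{lem:condition on A(z)}): one obtains $A\in H^{\infty}(\mathbb{C}^{+})$ with $\|A\|_{\infty}\le 1$ and
\[
\frac{E+E^{*}A}{E-E^{*}A}=M(z)=\frac{i}{\pi}\sum_{n}|E(\lambda_{n})|^{2}\Bigl(\frac{1}{z-\lambda_{n}}+\frac{\lambda_{n}}{1+\lambda_{n}^{2}}\Bigr)+ia,
\]
and then defines $F$ through $F^{*}:=-(M-1)G/E^{*}$, where $G$ is the canonical product over $\Lambda$. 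One checks $F^{*}/F=-A$, so $F\in\mathcal{HB}$, and $2G=EF+E^{*}F^{*}$, yielding $(ii)$. The Naimark-dilation picture in Section~3 is then shown to be a \emph{consequence} of this construction (Corollary~\ref{cor:naimark}), not its engine. Your proposal inverts cause and effect here: the dilation theorem explains why the answer looks the way it does, but the actual production of $E$ and $F$ requires the de~Branges structure theorem and the Herglotz representation, both of which are absent from your sketch.
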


The reproducing kernel property (\ref{eq:reproducingkernelProperty}) implies that a sequence $\Lambda=\{\lambda_{n}\}_{n\in\mathbb{Z}}$ is a sampling sequence in $\mathcal{H}(E)$ if and only if the corresponding sequence of reproducing kernels $\{K_{E}(\lambda_{n}, \cdot)\}_{n\in \mathbb{Z}}$ is a \emph{frame} for $\mathcal{H}(E)$, therefore, any function $f\in \mathcal{H}(E)$ can be reconstructed from its samples on the sequence $\Lambda$ by the (sampling) formula
\begin{equation}
  f(z)=\sum_{n\in \mathbb{Z}} f(\lambda_{n})\, \tilde{k}_{n}(z)
\end{equation}
 where $\{\tilde{k}_{n}\}_{n\in \mathbb{Z}}$  is a dual frame of $\{K(\lambda_{n}, \cdot)\}_{n\in \mathbb{Z}}$.  The same can be said of a normalized sampling sequence and normalized kernels $\left\{ \dfrac{K_{E}(\lambda_{n}, \cdot)}{ \| K_{E}(\lambda_{n}, \cdot ) \|_{E}} \right\}_{n}$.

Characterizing sampling sequences in de Branges spaces other than the Paley-Wiener spaces is unresolved in general. Our first main result is an extension of Ortega-Cerd\'{a} and Seip's necessary conditions for a sequence to be sampling for a de Branges space. The argument proceeds almost identically to the one given in \cite{OS02a}.

\begin{lem}\label{lem:condition on A(z)}
Suppose that $E \in \mathcal{HB}$, and suppose that $\mu = \sum_{n} | E(\lambda_{n}) |^2 \delta_{\lambda_{n}}$ satisfies
\[ \| f \|_{E}^2 = \int  \left| \dfrac{f(t)}{E(t)} \right|^2 d \mu(t). \]
Then there exists a function $A \in H^{\infty}(\mathbb{C}^{+})$ with $\| A \|_{\infty} \leq 1$ such that
\begin{equation}
\frac{i}{\pi} \sum_{n} | E (\lambda_{n} ) |^2 \left( \dfrac{ 1 }{ z - \lambda_{n} } + \dfrac{ \lambda_{n} }{ 1 + \lambda_{n}^{2} } \right) + i a = \dfrac{ E(z)+ E^{*}(z) A(z) }{ E(z) - E^{*}(z) A(z)}
\end{equation}
for some $a \in \mathbb{R}$ and all $z \in \mathbb{C}^{+}$.
\end{lem}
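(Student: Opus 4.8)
The plan is to read the left-hand side as $-i$ times a Herglotz function associated with $\mu$, pass to its Cayley transform, and then exploit the fact that $\mu$ \emph{reproduces the norm} of $\mathcal H(E)$ (an equality, not just a Bessel bound) to show that this Cayley transform is divisible by the inner function $\Theta:=E^{*}/E$.

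Write $G(z)$ for the left-hand side. One first checks that the series converges absolutely: since
\[ \frac{1}{z-\lambda_{n}}+\frac{\lambda_{n}}{1+\lambda_{n}^{2}}=\frac{1+\lambda_{n}z}{(z-\lambda_{n})(1+\lambda_{n}^{2})}=O(\lambda_{n}^{-2}), \]
this reduces to $\sum_{n}|E(\lambda_{n})|^{2}/(1+\lambda_{n}^{2})<\infty$, which follows from the hypothesis applied to the test function $K_{E}(w,\cdot)\in\mathcal H(E)$ (for fixed $w\in\mathbb C^{+}$), whose values on $\Lambda$ satisfy $|K_{E}(w,\lambda_{n})|\asymp|E(\lambda_{n})|/(1+|\lambda_{n}|)$. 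With this, $\operatorname{Re}G(z)=\tfrac{\operatorname{Im}z}{\pi}\sum_{n}|E(\lambda_{n})|^{2}/|z-\lambda_{n}|^{2}\ge 0$, so $G$ maps $\mathbb C^{+}$ into the closed right half-plane for every real $a$. Hence $\psi:=(G-1)/(G+1)$ is analytic on $\mathbb C^{+}$ with $\|\psi\|_{\infty}\le 1$, and --- because $\mu$ is purely atomic, hence singular, and $G$ has no linear term --- the nontangential boundary values of $\operatorname{Re}G$ vanish a.e.\ on $\mathbb R$, so $|\psi|=1$ a.e.\ there; thus $\psi$ is inner.

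Solving the asserted identity for $A$ forces $A=\dfrac{E}{E^{*}}\cdot\dfrac{G-1}{G+1}=\dfrac{\psi}{\Theta}$, and conversely any such $A$ reproduces the identity via $G=(1+\Theta A)/(1-\Theta A)=(E+E^{*}A)/(E-E^{*}A)$. Since $\Theta=E^{*}/E$ is itself inner (analytic and $<1$ in $\mathbb C^{+}$, where $E$ has no zeros, and unimodular on $\mathbb R$), the lemma is equivalent to: \emph{for the appropriate $a\in\mathbb R$, $\Theta$ divides $\psi$}, i.e.\ $A=\psi/\Theta\in H^{\infty}(\mathbb C^{+})$ with $\|A\|_{\infty}\le1$. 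Concretely this asks that $\psi$ vanish at each zero of $E^{*}$ in $\mathbb C^{+}$ to at least its order, carry the singular inner factor of $\Theta$, and have mean type no larger than that of $\Theta$.

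This divisibility is the crux, and it is exactly here that the equality $\|f\|_{E}^{2}=\int|f/E|^{2}\,d\mu$ --- rather than a Bessel-type bound --- is essential. I would argue as Ortega-Cerd\'a and Seip do: the map $f\mapsto f/E$ identifies $\mathcal H(E)$ isometrically with the model space $K_{\Theta}=H^{2}(\mathbb C^{+})\ominus\Theta H^{2}(\mathbb C^{+})$, under which the hypothesis says precisely that $K_{\Theta}$ embeds \emph{isometrically} into $L^{2}(\mu)$. Aleksandrov--Clark theory (equivalently, in de Branges language, the production of an $F\in\mathcal{HB}$ with $\mathcal H(E)\subseteq\mathcal H(EF)$ and $\Lambda$ a complete interpolating sequence there --- the zero set of $EF+E^{*}F^{*}$, as in Theorem~\ref{thm:ocs}) then forces $\mu$ to be a positive multiple of a Clark measure of an inner function $\widetilde\Theta$ with $\Theta\mid\widetilde\Theta$, with $a$ the constant for which $G=(1-\widetilde\Theta)/(1+\widetilde\Theta)$; hence $\psi=-\widetilde\Theta$, so $\Theta\mid\psi$, $A=-\widetilde\Theta/\Theta$ is inner, and $\|A\|_{\infty}\le1$ by the maximum principle for bounded-type functions. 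A concrete foothold: feeding $K_{E}(\zeta,\cdot)$ and $z\mapsto E^{*}(z)/(z-\zeta)$, both in $\mathcal H(E)$ when $E^{*}(\zeta)=0$ with $\zeta\in\mathbb C^{+}$, into the Parseval expansion $f=\sum_{n}f(\lambda_{n})K_{E}(\lambda_{n},\cdot)$ gives identities like $\sum_{n}|E(\lambda_{n})|^{2}/|\lambda_{n}-\zeta|^{2}=\pi/\operatorname{Im}\zeta$, so $\operatorname{Re}G(\zeta)=1$ there --- the trace of $G(\zeta)=1$, i.e.\ $\psi(\zeta)=0$, which is the Blaschke part of the divisibility. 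I expect the real obstacle to be the Clark-subordination itself: showing the isometry, with the right $a$, simultaneously controls the Blaschke, singular, and exponential parts of $\psi/\Theta$; the rest is bookkeeping with bounded-type functions and the Herglotz representation.
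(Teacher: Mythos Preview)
Your route is genuinely different from the paper's and considerably more ambitious. The paper's proof is two lines of substance: it invokes de~Branges \cite[p.~90]{dB68a} as a black box to produce $A\in H^{\infty}(\mathbb C^{+})$, $\|A\|_\infty\le 1$, with
\[
\operatorname{Re}\left(\frac{E+E^{*}A}{E-E^{*}A}\right)=\frac{y}{\pi}\int\frac{d\mu(t)}{(x-t)^{2}+y^{2}},
\]
and then simply writes down the harmonic conjugate (citing Koosis) and simplifies to obtain the stated series. All of the content you are trying to supply --- the Clark subordination, the divisibility $\Theta\mid\psi$ --- lives inside that de~Branges citation. So what you are doing is not proving the lemma by a different method so much as reproving the cited theorem from scratch in Clark-measure language; that is a legitimate and illuminating exercise, but it is much heavier than what the paper actually needs.

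On the substance of your sketch: the setup (Herglotz function, Cayley transform, reduction to $\Theta\mid\psi$) is correct and clean. But your ``concrete foothold'' for the Blaschke part has a gap. From $\|K_{E}(\zeta,\cdot)\|_{E}^{2}=\sum_{n}|K_{E}(\zeta,\lambda_{n})|^{2}$ at a zero $\zeta$ of $E^{*}$ you correctly get $\operatorname{Re}G(\zeta)=1$, but that alone does \emph{not} force $\psi(\zeta)=0$: if $G(\zeta)=1+it$ with $t\neq 0$ then $\psi(\zeta)=it/(2+it)\neq 0$. You need the imaginary part as well, and your second test function $E^{*}(z)/(z-\zeta)$ is the right idea --- polarizing $K_{E}(\zeta,\cdot)$ against it via the isometry does pin down $\operatorname{Im}G(\zeta)$ for the correct choice of $a$ --- but this computation is not written out and the dependence on $a$ is not tracked. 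Beyond the Blaschke factor, the singular inner factor and the mean type of $\Theta$ are left entirely to the phrase ``Aleksandrov--Clark theory then forces\ldots''; that is exactly the hard part of de~Branges' theorem, and invoking it here is no lighter than the paper's direct citation. If your goal is an independent proof, those pieces need to be filled in; if not, the paper's two-line argument already does the job.
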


\begin{proof}
There exists an $A \in H^{\infty}(\mathbb{C}^{+})$ of norm at most 1 (see \cite[Pg. 90]{dB68a}) such that
\[ Re \left( \dfrac{ E(z)+ E^{*}(z) A(z) }{ E(z) - E^{*}(z) A(z)} \right) = \dfrac{y}{\pi} \int \dfrac{ d \mu(t)}{ (x - t)^{2} + y^2 } =: V(z). \]
A harmonic conjugate for $V(z)$ is given by (see \cite{Koosis98}, pg 109):
\[ \widetilde{V}(z) = \dfrac{1}{\pi} \int \dfrac{ x - t }{ (x - t)^2 + y^2 } + \dfrac{ t }{ 1 + t^2} d \mu(t). \]
Thus, we obtain for some $a \in \mathbb{R}$
\begin{align*}
\dfrac{ E(z)+ E^{*}(z) A(z) }{ E(z) - E^{*}(z) A(z)} &= V(z) + i \widetilde{V}(z) + ia \\
&= \dfrac{1}{\pi} \sum_{n} | E(\lambda_{n}) |^2 \left( \dfrac{ y + i(x - \lambda_{n} ) }{ (x - \lambda_{n})^2 + y^2 } + \dfrac{ i \lambda_{n} }{ 1 + \lambda_{n}^2} \right) + ia \\
&= \dfrac{1}{\pi} \sum_{n} | E(\lambda_{n}) |^2 \left( \dfrac{ i (\overline{z} - \lambda_{n}) }{ (z - \lambda_{n}) (\overline{z} - \lambda_{n}) } + \dfrac{ i \lambda_{n} }{ 1 + \lambda_{n}^2} \right) + ia \\
&= \dfrac{i}{\pi} \sum_{n} | E(\lambda_{n}) |^2 \left( \dfrac{ 1 }{ z - \lambda_{n} } + \dfrac{ \lambda_{n} }{ 1 + \lambda_{n}^2} \right) + ia.
\end{align*}

\end{proof}


\begin{theorem}\label{thm:main theorem of E and F}
Let $E_{0} \in \mathcal{HB}$.  If $\Lambda$ is sampling sequence for $\mathcal{H}(E_{o})$, then there exists two functions $E,F\in \mathcal{HB}$ such that
\begin{itemize}
\item [(i)] $\mathcal{H}(E_{o})\simeq \mathcal{H}(E)$,
\item [(ii)] $\Lambda$ constitutes the zero sequence of $EF+E^{*}F^{*}$.
\end{itemize}
\end{theorem}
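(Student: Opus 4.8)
The plan is to adapt the argument of \cite{OS02a} almost verbatim, letting Theorem \ref{thm:H1-H3 de Branges theorem} and Lemma \ref{lem:condition on A(z)} play the roles that the explicit structure of $PW_{\pi}$ plays there. As in the excerpt we may assume $E_{o}$ has no real zeros; then, since the real zeros of a de Branges function are exactly the points at which every element of the space vanishes, each Hermite--Biehler function produced below will be free of real zeros as well. First I would manufacture the ``right'' $E$: since $\Lambda$ is sampling, $\|f\|_{*}^{2} := \sum_{n}|f(\lambda_{n})|^{2}$ is a Hilbert-space norm on the set $\mathcal{H}(E_{o})$, equivalent to $\|\cdot\|_{E_{o}}$, and I would verify that $(\mathcal{H}(E_{o}),\|\cdot\|_{*})$ satisfies $(H1)$--$(H3)$. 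Indeed $(H2)$ is the norm equivalence; $(H3)$ and the isometry in $(H1)$ hold because $|\lambda_{n}-\overline{w}| = |\lambda_{n}-w|$ for real $\lambda_{n}$, so that neither $f \mapsto f^{*}$ nor $f \mapsto f(z)\tfrac{z-\overline{w}}{z-w}$ alters $\sum_{n}|f(\lambda_{n})|^{2}$, while the fact that $g$ remains in the space is property $(H1)$ for $\mathcal{H}(E_{o})$. By Theorem \ref{thm:H1-H3 de Branges theorem} this space equals some $\mathcal{H}(E)$ isometrically; since the underlying set is unchanged, $\mathcal{H}(E_{o}) \simeq \mathcal{H}(E)$, which is $(i)$. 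As $E$ has no real zeros, $E(\lambda_{n})\neq 0$, and with $\mu := \sum_{n}|E(\lambda_{n})|^{2}\delta_{\lambda_{n}}$ we obtain the \emph{exact} identity $\|f\|_{E}^{2} = \|f\|_{*}^{2} = \int |f/E|^{2}\,d\mu$ required by Lemma \ref{lem:condition on A(z)}. (Testing this identity on $K_{E}(i,\cdot)$ forces $\sum_{n}|E(\lambda_{n})|^{2}(1+\lambda_{n}^{2})^{-1}<\infty$, so $\Lambda$ has no finite accumulation point and the series below defines a genuine meromorphic function.)

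Next, Lemma \ref{lem:condition on A(z)} supplies $A\in H^{\infty}(\mathbb{C}^{+})$ with $\|A\|_{\infty}\le 1$ and $a\in\mathbb{R}$ such that
\[ \Phi(z) := \frac{i}{\pi}\sum_{n}|E(\lambda_{n})|^{2}\Bigl(\frac{1}{z-\lambda_{n}}+\frac{\lambda_{n}}{1+\lambda_{n}^{2}}\Bigr)+ia = \frac{E(z)+E^{*}(z)A(z)}{E(z)-E^{*}(z)A(z)}. \]
Here $\Phi$ is meromorphic on $\mathbb{C}$ with simple poles exactly at the $\lambda_{n}$ (residues $\tfrac{i}{\pi}|E(\lambda_{n})|^{2}\neq 0$), it maps $\mathbb{C}^{+}$ into the right half-plane, and $\Phi(x)\in i\mathbb{R}$ for real $x\notin\Lambda$. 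From $\Phi-1 = \tfrac{2E^{*}A}{E-E^{*}A}$ and $\Phi+1 = \tfrac{2E}{E-E^{*}A}$ I get
\[ \Theta := \frac{\Phi-1}{\Phi+1} = A\,\frac{E^{*}}{E}, \qquad \Theta-1 = \frac{-2}{\Phi+1}. \]
The second identity shows that the zeros of $\Theta-1$ are exactly the poles of $\Phi$, i.e.\ $Z(\Theta-1)=\Lambda$, all simple, where $Z(\cdot)$ denotes the zero set. Since $\Phi$ is $i\mathbb{R}$-valued on $\mathbb{R}\setminus\Lambda$ we have $|\Theta|=1$ a.e.\ on $\mathbb{R}$; together with $|\Theta|\le 1$ on $\mathbb{C}^{+}$ (as $|A|\le 1$ and $|E^{*}/E|<1$ there) and meromorphy on all of $\mathbb{C}$, this says $A = \Theta\,(E/E^{*})$ is itself a meromorphic inner function, with no real zeros or poles.

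By the standard correspondence between meromorphic inner functions and Hermite--Biehler functions (see \cite{dB68a}), I would then write $A = G^{*}/G$ with $G\in\mathcal{HB}$ free of real zeros and set $F := iG\in\mathcal{HB}$, so that $A = -F^{*}/F$ and $F$ has no real zeros. Then $\Theta = A\,\tfrac{E^{*}}{E} = -\tfrac{E^{*}F^{*}}{EF}$, hence
\[ \Theta-1 = -\,\frac{EF+E^{*}F^{*}}{EF}. \]
Now $EF\in\mathcal{HB}$ has no real zeros, and $EF+E^{*}F^{*}$ cannot vanish at a zero of $EF$ (there $E^{*}F^{*}\neq 0$, since the zeros of $E^{*}F^{*}$ lie in $\mathbb{C}^{+}$ and those of $EF$ in $\mathbb{C}^{-}$), so the zero divisor of the right-hand side is precisely $Z(EF+E^{*}F^{*})$. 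Since $Z(\Theta-1)=\Lambda$ from the previous paragraph, this yields $Z(EF+E^{*}F^{*})=\Lambda$, with simple zeros, which is $(ii)$.

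I expect the main obstacle to be this last passage from the merely bounded analytic function $A$ handed over by Lemma \ref{lem:condition on A(z)} to an honest entire $F\in\mathcal{HB}$: one must first be certain that $A$ is genuinely inner and extends meromorphically to all of $\mathbb{C}$ (which is exactly where the explicit form of $\Phi$ and the Hermite--Biehler property of $E$ enter), then invoke the inner-function/$\mathcal{HB}$ dictionary, and finally carry out the bookkeeping showing that the zero set of $EF+E^{*}F^{*}$ is \emph{precisely} $\Lambda$---no spurious real zeros coming from $E$, $F$, or from cancellation---which is what forces the various ``no real zeros'' normalizations. A secondary point to dispatch is the convergence of the defining series for $\Phi$ and of the harmonic-conjugate integral used in Lemma \ref{lem:condition on A(z)}, both of which follow from testing the identity $\|f\|_{E}^{2}=\int|f/E|^{2}\,d\mu$ obtained above against the reproducing kernel at $i$.
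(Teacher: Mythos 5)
Your proposal is correct and follows the paper's overall strategy --- the first half (renorming $\mathcal{H}(E_{o})$ by $\sum_{n}|f(\lambda_{n})|^{2}$, checking (H1)--(H3), invoking Theorem \ref{thm:H1-H3 de Branges theorem} to get $E$, then Lemma \ref{lem:condition on A(z)} to get $A$ and the meromorphic function $\Phi$) is essentially identical to the paper's, and in fact you are more careful than the paper about verifying the axioms and the convergence of the series defining $\Phi$. Where you genuinely diverge is in the construction of $F$. The paper builds $F$ by hand: it introduces the canonical product $G(z)=\prod_{n}(1-z/\lambda_{n})e^{z/\lambda_{n}}$ with zero set exactly $\Lambda$, defines $F^{*}:=-(M-1)G/E^{*}$, checks this is entire with $F^{*}/F=-A$ on $\mathbb{C}^{+}$ (hence $F\in\mathcal{HB}$), and then reads off $G=\tfrac{1}{2}(EF+E^{*}F^{*})$ directly, so the zero-set identification is immediate. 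You instead observe that $A=\Theta\,E/E^{*}$ is a meromorphic inner function and outsource the existence of $F$ to the classical dictionary between meromorphic inner functions and Hermite--Biehler functions, after which you must do (and do correctly) the bookkeeping that no zeros of $EF+E^{*}F^{*}$ are created or destroyed by cancellation against $Z(EF)$. The trade-off: the paper's route is self-contained and produces the clean identity $G=\mathrm{Re}(EF)$ on $\mathbb{R}$ for free, while yours is shorter modulo the cited representation theorem and makes the structural reason for the result (innerness of $A$) more transparent. One shared loose end, not a defect of your argument relative to the paper's: if $A$ happens to be a unimodular constant (the case where $\{K_{E}(\lambda_{n},\cdot)\}$ is already an orthogonal basis), then $|F^{*}|\equiv|F|$ and the strict Hermite--Biehler inequality for $F$ fails in both constructions; this degenerate case would need separate handling in either proof.
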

\begin{proof}
Since $\Lambda=\{\lambda_{n}\}_{n\in \mathbb{Z}}$ is a sampling sequence for $\mathcal{H}(E_{o})$, there exists $A,B>0$ such that
\[ A \|f\|^{2}_{E_{o}} \;\leq\; \sum_{n\in \mathbb{Z}} |f(\lambda_{n})|^{2} \;\leq \;B \|f\|^{2}_{E_{o}} \]
for all $f\in\mathcal{H}(E_{o})$. This means that the space $\mathcal{H}(E_{o})$ equipped with the norm
\[\left(\sum_{n} |f(\lambda_{n})|^{2} \right)^{1/2}\]
 is a de Branges space.

Theorem \ref{thm:H1-H3 de Branges theorem} provides us with a function $E\in \mathcal{HB}$ such that $\mathcal{H}(E_{o})\simeq \mathcal{H}(E)$, and
\begin{equation}\label{eqn:equalityofNorms-of-TheoremC}
 \sum_{n} |f(\lambda_{n})|^{2}= \int_{\mathbb{R}}\bigg|\frac{f(t)}{E(t)}\bigg|^{2}dt
 \end{equation}
for all $f\in\mathcal{H}(E_{o})$.

To prove the existence of the function $F$,  Lemma (\ref{lem:condition on A(z)})  proves that there exists a bounded holomorphic function $A$ in the upper half-plane with norm $\|A\|_{\infty}\leq 1$ and a real number $a$ such that
\begin{equation}\label{eqn:definition-of-M(z)}
 \frac{i}{\pi}  \sum_{n} |E(\lambda_{n})|^{2}\bigg( \frac{1}{z-\lambda_{n}}+\frac{\lambda_{n}}{1+\lambda_{n}^{2}}  \bigg)+ia = \frac{E(z)+E^{*}(z)A(z)}{E(z)-E^{*}(z)A(z)}
\end{equation}

Note that the right-hand side is a holomorphic function defined in the upper half plane, and the left-hand side is a meromorphic function defined in the whole plane.

Let $M(z):= \frac{E(z)+E^{*}(z)A(z)}{E(z)-E^{*}(z)A(z)}$. Then by the left hand side of Equation (\ref{eqn:definition-of-M(z)}), $M^{*}=-M$, and
\[A(z)=\frac{M(z)-1}{M(z)+1}\; \frac{E(z)}{E^{*}(z)},\]
for $z \in \mathbb{C}^{+}$. From equation (\ref{eqn:definition-of-M(z)}), the function $M(z)-1$ has poles at the $\lambda_{n}$'s. Moreover, the function $M(z)-1$ vanishes whenever $E^{*}(z)$ vanishes.

Now, define
\[G(z):= \prod_{n} \bigg( 1-\frac{z}{\lambda_{n}} \bigg)e^{z/\lambda_{n}}.  \]
Note that $G^{*}=G$, and $G$ vanishes only at the $\lambda_{n}$'s. Define an entire function $F$ by
\[ F^{*}(z):= -\frac{(M(z)-1)G(z)}{E^{*}(z)}.\]
Then,
\[M(z)-1 = -\; \frac{E^{*}(z)F^{*}(z)}{G(z)},\]
and since $M^{*} = -M$, we also have,
\begin{equation}\label{eqn:M(z)+1}
     M(z)+1 = \frac{E(z)F(z)}{G(z)},
\end{equation}
which implies that $F^{*}(z)/F(z)=-A(z)$, for all $z\in \mathbb{C}^{+}$, and $F$ has no zeros in $\mathbb{C}^{+}$. Since $\|A\|_{\infty}\leq 1$, this implies that $|F^{*}(z)|<|F(z)|$ for all $z\in \mathbb{C}^{+}$ , and hence, $F\in \mathcal{HB}$.

Now we will see that $\Lambda$ is the zero set of $EF+E^{*}F^{*}$. First note that from (\ref{eqn:M(z)+1}), $G(z)=-M(z)G(z)+E(z)F(z)$, for all $z\in \mathbb{C}$. We know that if $x\in \mathbb{R}$, then $G(x)$ is real, and by the left hand side of Equation (\ref{eqn:definition-of-M(z)}), $M(x)G(x)$ is an imaginary number. Thus, $G$ is the real part of $EF$ for real $z$. In other words,
\[G(z)=\frac{E(z)F(z)+E^{*}(z)F^{*}(z)}{2},\]
for all $z\in \mathbb{C}$. This implies that $\Lambda$ is the zero set of $EF+E^{*}F^{*}$, because $G(\lambda_{n})=0$ for all $n\in \mathbb{Z}$.
\end{proof}

\subsection{Duality of Kernel Functions}

\begin{theorem} \label{thm:similarity}
Fix $E_{0} \in \mathcal{HB}$ and suppose $\{ \lambda_{n} \}_{n} \subset \mathbb{R}$ is a sampling sequence in $\mathcal{H}(E_{0})$.  Let $E$ be the Hermite-Biehler function given by Theorem \ref{thm:main theorem of E and F}.  Then the kernel functions $\{ K_{E}(\lambda_{n}, \cdot ) \}_{n}$ form a frame in $\mathcal{H}(E_{0})$, and for every $f \in \mathcal{H}(E_{0})$,
\[ f(z) = \sum_{n} f(\lambda_{n}) K_{E}(\lambda_{n}, z), \]
with convergence in the norm in $\mathcal{H}(E_{0})$.  Moreover, the frame $\{ K_{E}(\lambda_{n}, \cdot ) \}_{n}$ is the canonical dual frame for $\{ K_{E_{0}}(\lambda_{n}, \cdot ) \}_{n}$.
\end{theorem}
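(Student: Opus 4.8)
The plan is to exploit the fact that the construction in Theorem~\ref{thm:main theorem of E and F} produces an $E$ for which $\mathcal{H}(E)$ and $\mathcal{H}(E_{0})$ have the same underlying set of entire functions and equivalent norms (the sampling inequalities give the equivalence), together with the \emph{exact} identity $\|f\|_{E}^{2}=\sum_{n}|f(\lambda_{n})|^{2}$ recorded in (\ref{eqn:equalityofNorms-of-TheoremC}). Polarizing this identity gives $\langle f,g\rangle_{E}=\sum_{n}f(\lambda_{n})\overline{g(\lambda_{n})}$ for all $f,g$ in the common space (the series converges absolutely since $\Lambda$ is a sampling sequence). Since $K_{E}(\lambda_{n},\cdot)\in\mathcal{H}(E)=\mathcal{H}(E_{0})$ and $f(\lambda_{n})=\langle f,K_{E}(\lambda_{n},\cdot)\rangle_{E}$ by the reproducing property in $\mathcal{H}(E)$, the identity becomes $\sum_{n}|\langle f,K_{E}(\lambda_{n},\cdot)\rangle_{E}|^{2}=\|f\|_{E}^{2}$; that is, $\{K_{E}(\lambda_{n},\cdot)\}_{n}$ is a Parseval frame for $\mathcal{H}(E)$, and therefore — the norms being equivalent — a frame for $\mathcal{H}(E_{0})$. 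The Parseval reconstruction in $\mathcal{H}(E)$ reads $f=\sum_{n}\langle f,K_{E}(\lambda_{n},\cdot)\rangle_{E}\,K_{E}(\lambda_{n},\cdot)=\sum_{n}f(\lambda_{n})K_{E}(\lambda_{n},\cdot)$ with unconditional convergence in $\|\cdot\|_{E}$, hence also in $\|\cdot\|_{E_{0}}$; this is the asserted sampling formula.

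To see that $\{K_{E}(\lambda_{n},\cdot)\}_{n}$ is in fact the \emph{canonical} dual of $\{K_{E_{0}}(\lambda_{n},\cdot)\}_{n}$, I would let $S$ denote the frame operator of $\{K_{E_{0}}(\lambda_{n},\cdot)\}_{n}$ on $\mathcal{H}(E_{0})$ (which is positive, self-adjoint and invertible, the kernels being a frame because $\Lambda$ is sampling). Using the reproducing property in $\mathcal{H}(E_{0})$ and the polarized identity above, for all $f,g$,
\[ \langle Sf,g\rangle_{E_{0}}=\sum_{n}\langle f,K_{E_{0}}(\lambda_{n},\cdot)\rangle_{E_{0}}\,\overline{\langle g,K_{E_{0}}(\lambda_{n},\cdot)\rangle_{E_{0}}}=\sum_{n}f(\lambda_{n})\overline{g(\lambda_{n})}=\langle f,g\rangle_{E}, \]
so $S$ is precisely the positive operator that represents the $E$-inner product relative to the $E_{0}$-inner product. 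Consequently, for each $n$ and all $f\in\mathcal{H}(E_{0})$,
\[ \langle f,SK_{E}(\lambda_{n},\cdot)\rangle_{E_{0}}=\langle Sf,K_{E}(\lambda_{n},\cdot)\rangle_{E_{0}}=\langle f,K_{E}(\lambda_{n},\cdot)\rangle_{E}=f(\lambda_{n})=\langle f,K_{E_{0}}(\lambda_{n},\cdot)\rangle_{E_{0}}, \]
which forces $SK_{E}(\lambda_{n},\cdot)=K_{E_{0}}(\lambda_{n},\cdot)$, i.e.\ $K_{E}(\lambda_{n},\cdot)=S^{-1}K_{E_{0}}(\lambda_{n},\cdot)$. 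By definition this says $\{K_{E}(\lambda_{n},\cdot)\}_{n}$ is the canonical dual frame of $\{K_{E_{0}}(\lambda_{n},\cdot)\}_{n}$; in particular the two kernel systems are similar via the invertible operator $S^{-1}$, consistent with $\Theta_{\{K_{E}(\lambda_{n},\cdot)\}}(\mathcal{H}(E_{0}))=\Theta_{\{K_{E_{0}}(\lambda_{n},\cdot)\}}(\mathcal{H}(E_{0}))$ as in \cite{Cas00a}.

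The only things to be careful about are bookkeeping about which inner product is in force at each step — in particular, that $K_{E}(\lambda_{n},\cdot)$, though built from $E$, nonetheless lies in $\mathcal{H}(E_{0})$ and, when paired against $K_{E_{0}}(\lambda_{m},\cdot)$ in $\langle\cdot,\cdot\rangle_{E_{0}}$, simply reproduces its value $K_{E}(\lambda_{n},\lambda_{m})$ — and asserting the reconstruction series to converge in the $\mathcal{H}(E_{0})$-norm, which is only equivalent to (not equal to) $\|\cdot\|_{E}$. The one genuinely substantive step is the computation $\langle Sf,g\rangle_{E_{0}}=\langle f,g\rangle_{E}$: it says that the change of norm effected by the sampling sequence is implemented by exactly the frame operator of the original kernels, and this is what promotes ``$\{K_{E}(\lambda_{n},\cdot)\}_{n}$ is \emph{a} dual frame'' to ``it is \emph{the} canonical dual frame.'' Everything else is a direct application of the Parseval-frame facts recalled in the introduction.
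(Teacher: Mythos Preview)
Your proof is correct, and the first half --- that $\{K_{E}(\lambda_{n},\cdot)\}_{n}$ is a Parseval frame in $\mathcal{H}(E)$ and hence a frame in $\mathcal{H}(E_{0})$, with the reconstruction formula following and converging in the equivalent norm --- is essentially identical to the paper's argument.

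The canonical-duality part, however, proceeds along a genuinely different line. The paper argues via \emph{similarity of frames}: it observes that $\{K_{E_{0}}(\lambda_{n},\cdot)\}_{n}\subset\mathcal{H}(E_{0})$ and $\{K_{E}(\lambda_{n},\cdot)\}_{n}\subset\mathcal{H}(E)$ have the same analysis range $\{(f(\lambda_{n}))_{n}:f\in\mathcal{H}(E)\}$, hence are similar (citing \cite{Cas00a}); composing with the inclusion $\mathcal{H}(E)\hookrightarrow\mathcal{H}(E_{0})$ makes them similar as frames in $\mathcal{H}(E_{0})$, and a dual frame that is similar to the original is necessarily the canonical dual. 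Your approach is more direct and computational: you identify the frame operator $S$ of $\{K_{E_{0}}(\lambda_{n},\cdot)\}_{n}$ explicitly as the positive operator realizing $\langle\cdot,\cdot\rangle_{E}$ relative to $\langle\cdot,\cdot\rangle_{E_{0}}$, and then verify $SK_{E}(\lambda_{n},\cdot)=K_{E_{0}}(\lambda_{n},\cdot)$ via the reproducing property. Your route is self-contained and avoids invoking the similarity characterization from \cite{Cas00a} as well as the (unstated in the paper) fact that similarity plus duality forces canonical duality; the paper's route is more structural and makes the similarity of the two kernel frames an explicit byproduct, which it later reuses in Corollary~\ref{cor:naimark}.
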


\begin{proof}
Recall that $E$ is defined such that $\mathcal{H}(E_{0}) = \mathcal{H}(E)$ (with equivalent norms) and for every $f \in \mathcal{H}(E_{0})$,
\[ \sum_{n} | f(\lambda_{n}) |^2 = \| f \|_{E}^{2}. \]
Thus, it follows that $\{ K_{E}(\lambda_{n} , \cdot ) \}_{n}$ is a Parseval frame in $\mathcal{H}(E)$, hence
\begin{equation} \label{Eq:reconstruct}
f(z) = \sum_{n} \langle f , K_{E}(\lambda_{n}, \cdot ) \rangle_{E} K_{E}(\lambda_{n}, z ) = \sum_{n} f(\lambda_{n}) K_{E}(\lambda_{n}, z )
\end{equation}
with convergence in $\mathcal{H}(E)$.  Since the norms are equivalent, the sum in Equation (\ref{Eq:reconstruct}) converges in $\mathcal{H}(E_{0})$.  Also as a consequence of the equivalent norms, the inclusion $I : \mathcal{H}(E) \to \mathcal{H}(E_{0}): f \mapsto f$ is an invertible operator, and so $\{ K_{E}(\lambda_{n}, \cdot ) \}$ is a frame in $\mathcal{H}(E_{0})$.  Combining the previous observations demonstrates the duality.  To establish the canonical duality, we claim that $\{ K_{E_{0}}( \lambda_{n}, \cdot ) \}_{n}$ and $\{ K_{E} ( \lambda_{n}, \cdot ) \}_{n}$ are similar as frames in $\mathcal{H}(E_{0})$.  The frame $ \{ K_{E_{0}} (\lambda_{n}, \cdot ) \}_{n} \subset \mathcal{H}(E_{0})$ is similar to $ \{ K_{E}(\lambda_{n} , \cdot ) \}_{n} \subset \mathcal{H}(E)$ since they have the same coefficient sequences, namely $ \{ ( f( \lambda_{n}))_{n} : f \in \mathcal{H}(E) \}$.  As noted before, the inclusion mapping from $\mathcal{H}(E)$ to $\mathcal{H}(E_{0})$ is a similarity, so the frame $\{ K_{E}(\lambda_{n}, \cdot )\}_{n}$ in $\mathcal{H}(E)$ is similar to itself in $\mathcal{H}(E_{0})$.  It follows that $\{ K_{E}(\lambda_{n}, \cdot )\}_{n}$ and $\{ K_{E_{0}}(\lambda_{n}, \cdot )\}_{n}$ are similar as frames in $\mathcal{H}(E_{0})$; consequently, they are canonical dual frames of each other.
\end{proof}

\section{Orthogonality in $\mathcal{H}(EF)$}

As we will show in the present section, Theorem \ref{thm:main theorem of E and F} is essentially performing Naimark dilation on the frame $\{ K_{E_0}(\lambda_{n}, \cdot ) \}_{n}$.  Theorem \ref{thm:main theorem of E and F} provides an embedding of $\mathcal{H}(E_{0})$ (and $\mathcal{H}(F)$) into a larger space, namely $\mathcal{H}(EF)$.   We shall show that $\mathcal{H}(E_{0})$ can be embedded into a larger Hilbert space $\mathcal{K}_{0}$, say by the embedding $T$, in such a way that the frame $\{ K_{E_{0}}(\lambda_{n}, \cdot) \}$ can be embedded into a Riesz basis for $\mathcal{K}_{0}$ having the form
\[ \{ \alpha_{n} T(K_{E_{0}}(\lambda_{n}, \cdot)) + \beta_{n} g_{n} \} \]
where $\alpha_{n}, \beta_{n} \in \mathbb{C}$ and $g_{n} \in \mathcal{K}_{0}$.

Unless specified otherwise, in this section $E,F \in \mathcal{HB}$, but need not be related to each other as in Theorem \ref{thm:main theorem of E and F}.

We define $\mathcal{I} : \mathcal{H}(E) \to \mathcal{H}(EF) : f \mapsto fF$; $\mathcal{I}$ is a linear isometry.
 \begin{lem}
The mapping $\mathcal{J} : \mathcal{H}(F) \to \mathcal{H}(EF)$ defined by $g \mapsto gE^{*}$ is a linear isometry.  Consequently, for every $g_{1}, g_{2} \in \mathcal{H}(F)$,
\begin{equation} \label{Eq:estar}
\langle g_{1} E^{*}, g_{2} E^{*} \rangle_{EF} = \langle g_{1}, g_{2} \rangle_{F}.
\end{equation}
\end{lem}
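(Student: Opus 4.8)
The plan is to verify directly that $gE^{*}$ lies in $\mathcal{H}(EF)$ with $\|gE^{*}\|_{EF} = \|g\|_{F}$ for every $g \in \mathcal{H}(F)$, by checking the three defining requirements for membership in a de Branges space, and then to read off Equation~(\ref{Eq:estar}) by polarization. Linearity of $\mathcal{J}$ and the fact that $gE^{*}$ is entire are immediate. For the norm, the key observation is that $|E^{*}(t)| = |\overline{E(\bar t)}| = |E(t)|$ for real $t$, so that
\[
\|gE^{*}\|_{EF}^{2} = \int_{\mathbb{R}} \left| \frac{g(t)E^{*}(t)}{E(t)F(t)} \right|^{2} dt = \int_{\mathbb{R}} \left| \frac{g(t)}{F(t)} \right|^{2} dt = \|g\|_{F}^{2} < \infty .
\]

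It remains to check the bounded-type and mean-type conditions. Writing $gE^{*}/(EF) = (g/F)\cdot(E^{*}/E)$, the factor $g/F$ is of bounded type and of nonpositive mean type in $\mathbb{C}^{+}$ because $g \in \mathcal{H}(F)$. For the factor $E^{*}/E$: since $E \in \mathcal{HB}$ it has no zeros in $\mathbb{C}^{+}$ and satisfies $|E^{*}(z)/E(z)| = |E(\bar z)/E(z)| < 1$ there, so $E^{*}/E$ is bounded and analytic in $\mathbb{C}^{+}$, hence of bounded type, with
\[
\mt_{+}(E^{*}/E) = \limsup_{y \to +\infty} \frac{\log|E^{*}(iy)/E(iy)|}{y} \leq 0 .
\]
Since a product of two functions of bounded type is again of bounded type and mean type is additive over such products, $gE^{*}/(EF)$ is of bounded type and of nonpositive mean type in $\mathbb{C}^{+}$. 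For the conjugate condition one computes $(gE^{*})^{*}/(EF) = g^{*}E/(EF) = g^{*}/F$, which is of bounded type and of nonpositive mean type in $\mathbb{C}^{+}$ directly from $g \in \mathcal{H}(F)$. Hence $gE^{*} \in \mathcal{H}(EF)$ and $\mathcal{J}$ is a linear isometry.

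Finally, Equation~(\ref{Eq:estar}) follows from the norm identity by polarization, or equally well directly from
\[
\langle g_{1}E^{*}, g_{2}E^{*}\rangle_{EF} = \int_{\mathbb{R}} \frac{g_{1}(t)\,\overline{g_{2}(t)}\,|E^{*}(t)|^{2}}{|E(t)|^{2}\,|F(t)|^{2}}\,dt = \int_{\mathbb{R}} \frac{g_{1}(t)\,\overline{g_{2}(t)}}{|F(t)|^{2}}\,dt = \langle g_{1}, g_{2}\rangle_{F},
\]
again using $|E^{*}(t)| = |E(t)|$ on $\mathbb{R}$. The argument is essentially routine; the only point needing a little care is the mean-type bookkeeping for the product $(g/F)(E^{*}/E)$, in particular recording that $E^{*}/E$ has nonpositive mean type in $\mathbb{C}^{+}$, which is the standard reflection property of Hermite--Biehler functions. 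Note that one cannot simply invoke the isometry $\mathcal{I}$: swapping the roles of $E$ and $F$ shows that $g \mapsto gE$ is an isometry $\mathcal{H}(F) \to \mathcal{H}(EF)$, but $gE^{*} \neq gE$ in general, so the verification for the factor $E^{*}/E$ genuinely has to be carried out.
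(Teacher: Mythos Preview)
Your proof is correct and complete. The paper, however, takes a different and shorter route for the well-definedness: rather than checking the bounded-type and mean-type conditions for $gE^{*}/(EF)$ directly, it first notes that $g \mapsto gE$ lands in $\mathcal{H}(EF)$ (this is just $\mathcal{I}$ with the roles of $E$ and $F$ swapped), applies this to $g^{*}$ (which lies in $\mathcal{H}(F)$ by property~(H3)), and then invokes (H3) for $\mathcal{H}(EF)$ to conclude that $(g^{*}E)^{*} = gE^{*} \in \mathcal{H}(EF)$. So the paper \emph{does} leverage the analogue of $\mathcal{I}$, contrary to your closing remark---the trick you missed is to combine it with the $*$-symmetry axiom (H3) rather than attempting to pass from $gE$ to $gE^{*}$ directly. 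Your explicit verification has the virtue of being self-contained and of making transparent exactly where the Hermite--Biehler condition enters (through $|E^{*}/E|<1$ on $\mathbb{C}^{+}$); the paper's argument is slicker but leans on the structural axiom (H3) in place of that analysis.
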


\begin{proof}
We claim that $\mathcal{J}$ is in fact well-defined:  for every $g \in \mathcal{H}(F)$, $gE \in \mathcal{H}(EF)$, so therefore $g^{*} E \in \mathcal{H}(EF)$.  Then, since $(g^{*} E)^{*} \in \mathcal{H}(EF)$, $g E^{*} \in \mathcal{H}(EF)$.  The linear and isometric conditions are easily verified.
\end{proof}


\begin{lem}
For $h \in \mathcal{H}(EF)$, $w \in \mathbb{C}$,
\[ \mathcal{I}^{*} h(w) = \int h(s) \dfrac{ \overline{ F(s) K_{E}(w,s) }}{ | E(s) |^2 | F(s) |^2 } ds. \]
Likewise,
\[ \mathcal{J}^{*} h(w) = \int h(s) \dfrac{ \overline{ E^{*}(s) K_{F}(w,s) }}{ | E(s) |^2 | F(s) |^2 } ds. \]
\end{lem}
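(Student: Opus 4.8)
\emph{Proof proposal.} The plan is to reduce both identities to the reproducing kernel property combined with the definition of the adjoint. Fix $h \in \mathcal{H}(EF)$ and $w \in \mathbb{C}$. Since $\mathcal{I}$ is a bounded linear map from $\mathcal{H}(E)$ into $\mathcal{H}(EF)$, the adjoint $\mathcal{I}^{*}$ maps $\mathcal{H}(EF)$ into $\mathcal{H}(E)$, so $\mathcal{I}^{*} h$ is a genuine element of the de Branges space $\mathcal{H}(E)$ and, by (H2), the point evaluation $(\mathcal{I}^{*} h)(w)$ is meaningful. Using the reproducing property (\ref{eq:reproducingkernelProperty}) — extended to real $w$ as noted immediately after that equation — followed by the definition of the adjoint, I would write
\[ (\mathcal{I}^{*} h)(w) = \langle \mathcal{I}^{*} h , K_{E}(w, \cdot) \rangle_{E} = \langle h , \mathcal{I}(K_{E}(w, \cdot)) \rangle_{EF}. \]

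Next I would observe that $\mathcal{I}(K_{E}(w, \cdot)) = F \cdot K_{E}(w, \cdot)$, which lies in $\mathcal{H}(EF)$ precisely because $\mathcal{I}$ is well defined, and then simply unwind the inner product of $\mathcal{H}(EF)$, whose generating function is $EF$ and hence whose weight is $|E(t)F(t)|^{2} = |E(t)|^{2}|F(t)|^{2}$:
\[ (\mathcal{I}^{*} h)(w) = \int_{\mathbb{R}} \frac{ h(s) \, \overline{ F(s) K_{E}(w,s) } }{ |E(s)|^{2} |F(s)|^{2} } \, ds, \]
which is the first claimed formula. The second formula is obtained identically: $(\mathcal{J}^{*} h)(w) = \langle \mathcal{J}^{*} h , K_{F}(w, \cdot) \rangle_{F} = \langle h , \mathcal{J}(K_{F}(w, \cdot)) \rangle_{EF} = \langle h , E^{*} \cdot K_{F}(w, \cdot) \rangle_{EF}$, and expanding the $\mathcal{H}(EF)$ inner product yields the stated integral with $\overline{E^{*}(s) K_{F}(w,s)}$ in the numerator.

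There is essentially no hard step here; the only points requiring a little care are (a) confirming that $\mathcal{I}^{*}h$ and $\mathcal{J}^{*}h$ really lie in spaces of entire functions satisfying (H1)--(H3), so that evaluating at $w$ and invoking the reproducing kernel is legitimate — immediate from the boundedness of $\mathcal{I}$, $\mathcal{J}$ and the previous lemma — and (b) the identifications $\mathcal{I}(K_{E}(w, \cdot)) = F \cdot K_{E}(w, \cdot)$ and $\mathcal{J}(K_{F}(w, \cdot)) = E^{*} \cdot K_{F}(w, \cdot)$, which are nothing more than the definitions of $\mathcal{I}$ and $\mathcal{J}$. If one wishes to be scrupulous about possible real zeros of $E$ or $F$, one invokes the reduction recorded earlier in the paper (factoring out the real zeros of the generating function) so the integrand is finite almost everywhere; this does not affect the computation.
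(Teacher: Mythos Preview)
Your proof is correct and follows essentially the same approach as the paper: both use the reproducing kernel identity $(\mathcal{I}^{*}h)(w)=\langle \mathcal{I}^{*}h,K_{E}(w,\cdot)\rangle_{E}=\langle h,\mathcal{I}K_{E}(w,\cdot)\rangle_{EF}$ and then unwind the $\mathcal{H}(EF)$ inner product. The paper inserts the orthogonal decomposition $h=fF+h_{0}$ to show that each side equals $f(w)$ separately, whereas you chain the equalities directly; your route is in fact slightly more streamlined but not genuinely different.
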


\begin{proof}
Write $h = fF + h_{0}$ where $f \in \mathcal{H}(E)$ and $h_{0} \perp \mathcal{I}(\mathcal{H}(E))$.  Thus, we have
\begin{align*}
\int h(s) \dfrac{ \overline{ F(s) K_{E}(w,s) }}{ | E(s) |^2 | F(s) |^2 } ds &=  \langle h, F K_{E}(w, \cdot) \rangle_{EF} \\
&=  \langle f F, F K_{E}(w, \cdot) \rangle_{EF} \\
&=  \langle f ,  K_{E}(w, \cdot) \rangle_{E} \\
&= f(w).
\end{align*}
We have
\begin{align*}
\langle \mathcal{I}^{*} h , K_{E}(w, \cdot) \rangle_{E} &= \langle h , \mathcal{I} K_{E}(w, \cdot) \rangle_{EF} \\
&= \langle f F + h_{0} , F K_{E}(w, \cdot) \rangle_{EF} \\
&= \langle f ,  K_{E}(w, \cdot) \rangle_{E} \\
&= f(w).
\end{align*}

A similar calculation applied to $h = h_{1} + gE^{*}$ demonstrates the integral form of $\mathcal{J}^{*} h(w)$, utilizing Equation (\ref{Eq:estar}).
\end{proof}

Recall that the reproducing kernel in the space $\mathcal{H}(EF)$ is given by:
\begin{equation} \label{Eq:KEF}
K_{EF}(w,z) = \dfrac{\overline{E(w)} \overline{F(w)} E(z) F(z) - E(\overline{w}) F(\overline{w}) E^{*}(z) F^{*}(z) }{2 \pi i (\overline{w} - z) }
\end{equation}

\begin{lem}
For $w \in \mathbb{C}$,
\[ K_{EF}(w,z) = \overline{F(w)}F(z) K_{E}(w,z) + E(\overline{w}) E^{*}(z) K_{F}(w,z). \]
\end{lem}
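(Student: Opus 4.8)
The plan is to verify the identity by expanding both sides via the explicit formulas for the reproducing kernels given in \eqref{eq:reproducingkerneldefn} and \eqref{Eq:KEF}, and checking that the numerators match after putting everything over the common denominator $2\pi i(\overline{w}-z)$. First I would write $K_E(w,z) = \dfrac{\overline{E(w)}E(z) - E(\overline{w})E^*(z)}{2\pi i(\overline{w}-z)}$ and $K_F(w,z) = \dfrac{\overline{F(w)}F(z) - F(\overline{w})F^*(z)}{2\pi i(\overline{w}-z)}$, substitute into the claimed right-hand side, and multiply through by $2\pi i(\overline{w}-z)$. The right-hand side numerator becomes
\[
\overline{F(w)}F(z)\bigl(\overline{E(w)}E(z) - E(\overline{w})E^*(z)\bigr) + E(\overline{w})E^*(z)\bigl(\overline{F(w)}F(z) - F(\overline{w})F^*(z)\bigr).
\]

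The key step is to observe that the two middle terms cancel: $-\overline{F(w)}F(z)E(\overline{w})E^*(z) + E(\overline{w})E^*(z)\overline{F(w)}F(z) = 0$. What remains is $\overline{F(w)}F(z)\overline{E(w)}E(z) - E(\overline{w})E^*(z)F(\overline{w})F^*(z)$, which is exactly the numerator of $K_{EF}(w,z)$ in \eqref{Eq:KEF}, since $\overline{E(w)}\,\overline{F(w)} = \overline{E(w)F(w)}$ and $E(\overline{w})F(\overline{w}) = E(\overline{w})F(\overline{w})$ while $E^*(z)F^*(z) = (EF)^*(z)$. Dividing back by $2\pi i(\overline{w}-z)$ gives the claimed formula for all $z$ and all $w$ with $\overline{w}\neq z$; the case $\overline{w}=z$ (i.e. $w$ real, $z=w$) follows by continuity in $z$, or alternatively one notes both sides are entire in $z$ for fixed $w$ and the identity of the numerators holds as an identity of entire functions, so the removable singularity is handled automatically.

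I do not anticipate a serious obstacle here — this is essentially an algebraic identity. The only point requiring a little care is bookkeeping with the involution $*$ and complex conjugation: one must keep straight that $f^*(z) = \overline{f(\overline{z})}$ is an entire function of $z$ whereas $\overline{f(w)}$ for fixed $w$ is just a constant, and that products behave well, $(EF)^* = E^*F^*$. Once these are tracked correctly, the cancellation of the cross terms is immediate and the identity drops out. It is also worth remarking (though not strictly needed) that this lemma is the analytic incarnation of the Naimark-dilation decomposition $\mathcal{H}(EF) = \mathcal{I}(\mathcal{H}(E)) \oplus \mathcal{J}(\mathcal{H}(F))$ that the preceding lemmas have been building toward, since evaluating the identity against $h \in \mathcal{H}(EF)$ and using the two previous lemmas expresses $h(w)$ as the sum of the contributions of its components in the two summands.
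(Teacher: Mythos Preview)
Your proposal is correct and follows essentially the same approach as the paper: both proofs expand the two terms on the right-hand side using the explicit kernel formula \eqref{eq:reproducingkerneldefn}, observe that the cross terms $\pm\,\overline{F(w)}F(z)E(\overline{w})E^{*}(z)$ cancel, and identify the surviving terms with the numerator of $K_{EF}$ in \eqref{Eq:KEF}. The paper presents the two expansions separately and then adds them, while you combine them from the start and explicitly multiply through by $2\pi i(\overline{w}-z)$, but the underlying computation is identical.
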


\begin{proof}
Using Equation (\ref{eq:reproducingkerneldefn}), we first calculate:
\begin{equation} \label{Eq:Ke}
\overline{F(w)} F(z) K_{E}(w,z) = \dfrac{\overline{E(w)} \overline{F(w)} E(z) F(z) - E(\overline{w}) \overline{F(w)} E^{*}(z) F(z) }{2 \pi i (\overline{w} - z) }.
\end{equation}
We then calculate:
\begin{align*}
E(\overline{w}) E^{*}(z) K_{F}(w,z)  &= E(\overline{w}) E^{*}(z) \dfrac{ \overline{F(w)} F(z) - F(w) F^{*}(z)}{2 \pi i (\overline{w} - z)} \\
&= \dfrac{ E(\overline{w}) \overline{F(w)} E^{*}(z) F(z) - E(\overline{w}) F(w) E^{*}(z) F^{*}(z) }{2 \pi i (\overline{w} -z)}.
\end{align*}
Combining this with Equation (\ref{Eq:Ke}), we obtain the right hand side of Equation (\ref{Eq:KEF}).
\end{proof}

\begin{lem}
The following equation holds for the kernel $K_{EF}$:
\begin{equation} \label{Eq:KEF-IJ}
K_{EF}(w, z) = \overline{F(w)} [\mathcal{I}(K_{E}(w,\cdot))](z) + E(\overline{w}) [\mathcal{J}(K_{F}(w, \cdot))](z).
\end{equation}
\end{lem}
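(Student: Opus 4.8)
The plan is to derive this identity as an immediate rewriting of the preceding lemma using nothing but the definitions of the isometries $\mathcal{I}$ and $\mathcal{J}$. First I would note that, by property (H2) together with the remark following Equation (\ref{eq:reproducingkernelProperty}), for every $w \in \mathbb{C}$ (real $w$ included) the reproducing kernel $K_{E}(w,\cdot)$ lies in $\mathcal{H}(E)$ and $K_{F}(w,\cdot)$ lies in $\mathcal{H}(F)$, so that $\mathcal{I}(K_{E}(w,\cdot))$ and $\mathcal{J}(K_{F}(w,\cdot))$ are genuinely well-defined elements of $\mathcal{H}(EF)$ and the right-hand side of (\ref{Eq:KEF-IJ}) makes sense.

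Next, unwinding the definitions $\mathcal{I} : f \mapsto fF$ and $\mathcal{J} : g \mapsto gE^{*}$, I would record the pointwise identities $[\mathcal{I}(K_{E}(w,\cdot))](z) = F(z)\,K_{E}(w,z)$ and $[\mathcal{J}(K_{F}(w,\cdot))](z) = E^{*}(z)\,K_{F}(w,z)$. Substituting these into the factorization
\[ K_{EF}(w,z) = \overline{F(w)}F(z)\,K_{E}(w,z) + E(\overline{w})\,E^{*}(z)\,K_{F}(w,z) \]
established in the previous lemma produces exactly Equation (\ref{Eq:KEF-IJ}).

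There is essentially no obstacle here: the argument is a two-line substitution, and the only points deserving a word of care are that the reproducing kernels belong to the appropriate de Branges spaces so that $\mathcal{I}$ and $\mathcal{J}$ may be applied to them, and that the preceding factorization identity is valid for all $w\in\mathbb{C}$, including real $w$, which is guaranteed by the extension of the kernels noted after Equation (\ref{eq:reproducingkernelProperty}).
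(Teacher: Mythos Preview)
Your proposal is correct and matches the paper's treatment exactly: the paper states this lemma without proof, as it is an immediate rewriting of the preceding factorization $K_{EF}(w,z) = \overline{F(w)}F(z)K_{E}(w,z) + E(\overline{w})E^{*}(z)K_{F}(w,z)$ via the definitions $\mathcal{I}(f)=fF$ and $\mathcal{J}(g)=gE^{*}$. Your extra remarks about well-definedness for real $w$ are accurate but not strictly needed here.
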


\begin{lem} \label{L:orthogonal}
The images of $\mathcal{I}$ and $\mathcal{J}$ are orthogonal in $\mathcal{H}(EF)$.
\end{lem}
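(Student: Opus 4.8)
The plan is to show that for every $f \in \mathcal{H}(E)$ and every $g \in \mathcal{H}(F)$, we have $\langle \mathcal{I}f, \mathcal{J}g \rangle_{EF} = 0$; since the images of $\mathcal{I}$ and $\mathcal{J}$ are the closed subspaces $\{ fF : f \in \mathcal{H}(E) \}$ and $\{ gE^* : g \in \mathcal{H}(F) \}$ respectively, this orthogonality on a spanning set suffices. I would try to reduce to testing against reproducing kernels. Since $\{ K_E(w, \cdot) : w \in \mathbb{C} \}$ has dense span in $\mathcal{H}(E)$ and likewise for $\mathcal{H}(F)$, and since $\mathcal{I}, \mathcal{J}$ are bounded (isometric), it is enough to verify
\[ \langle \mathcal{I}(K_E(w, \cdot)), \mathcal{J}(K_F(v, \cdot)) \rangle_{EF} = 0 \]
for all $w, v \in \mathbb{C}$.

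There are two natural ways to carry this out. The first, more computational route: expand the inner product as the integral $\int F(s) K_E(w,s) \overline{E^*(s) K_F(v,s)} \, |E(s)|^{-2}|F(s)|^{-2}\, ds$ over $\mathbb{R}$, and recognize it using the two lemmas for $\mathcal{I}^*$ and $\mathcal{J}^*$: namely $\langle \mathcal{I}(K_E(w,\cdot)), \mathcal{J}(K_F(v,\cdot)) \rangle_{EF} = \langle K_E(w,\cdot), \mathcal{I}^*\mathcal{J}(K_F(v,\cdot)) \rangle_E = [\mathcal{I}^*\mathcal{J}(K_F(v,\cdot))](w)$, so the claim is equivalent to $\mathcal{I}^* \mathcal{J} = 0$, i.e. $\mathcal{I}^*(gE^*) = 0$ for all $g \in \mathcal{H}(F)$. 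Using the integral formula for $\mathcal{I}^*$, $[\mathcal{I}^*(gE^*)](w) = \int g(s) E^*(s) \overline{F(s)K_E(w,s)}\,|E(s)|^{-2}|F(s)|^{-2}\,ds = \langle gE^*, FK_E(w,\cdot)\rangle_{EF}$, and one must show this vanishes. The second, cleaner route: use Equation (\ref{Eq:KEF-IJ}), which expresses $K_{EF}(w,\cdot)$ as $\overline{F(w)}\,\mathcal{I}(K_E(w,\cdot)) + E(\overline{w})\,\mathcal{J}(K_F(w,\cdot))$. Pair this against $\mathcal{J}(K_F(v,\cdot))$: the left side gives $\overline{ [\mathcal{J}(K_F(v,\cdot))](w) } = \overline{ E(\overline{w}) K_F(v,w) }$ by the reproducing property in $\mathcal{H}(EF)$ and the definition of $\mathcal{J}$, while the right side gives $\overline{F(w)}\langle \mathcal{I}(K_E(w,\cdot)), \mathcal{J}(K_F(v,\cdot))\rangle_{EF} + E(\overline{w})\langle \mathcal{J}(K_F(w,\cdot)), \mathcal{J}(K_F(v,\cdot))\rangle_{EF}$. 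Since $\mathcal{J}$ is isometric, the last inner product equals $\langle K_F(w,\cdot), K_F(v,\cdot)\rangle_F = K_F(v,w)$, so that term matches $\overline{E(\overline{w})}\,\overline{K_F(v,w)}$... I would need to track conjugates carefully here, but after cancellation the cross term $\overline{F(w)}\langle \mathcal{I}(K_E(w,\cdot)), \mathcal{J}(K_F(v,\cdot))\rangle_{EF}$ is forced to be zero for all $w$ where $F(w) \neq 0$, hence (by analyticity, since the inner product is a fixed number depending on $w$ analytically through $K_E(w,\cdot)$ and anti-analytically elsewhere) for all $w$.

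I would present the second route as the main argument. The key steps, in order: (1) reduce orthogonality of the images to $\langle \mathcal{I}(K_E(w,\cdot)), \mathcal{J}(K_F(v,\cdot))\rangle_{EF} = 0$ for all $w,v$, using density of reproducing kernels and continuity of $\mathcal{I}, \mathcal{J}$; (2) apply Equation (\ref{Eq:KEF-IJ}) with the reproducing property of $K_{EF}$, pairing against $\mathcal{J}(K_F(v,\cdot))$; (3) use that $\mathcal{J}$ is an isometry to evaluate the $\mathcal{J}$–$\mathcal{J}$ inner product as $K_F(v,w)$; (4) cancel and conclude the cross term vanishes wherever $F(w) \neq 0$, then remove that restriction. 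The main obstacle I anticipate is bookkeeping of complex conjugates and the variable $w$ versus $\overline{w}$ in the kernel identities — the identity (\ref{Eq:KEF-IJ}) mixes $\overline{F(w)}$ and $E(\overline{w})$, and the reproducing property gives $h(w) = \langle h, K_{EF}(w,\cdot)\rangle_{EF}$, so pairing in the correct slot and matching conjugates is where an error could slip in; everything else is formal manipulation of already-established isometry and reproducing-kernel facts.
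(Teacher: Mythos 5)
Your second (main) route is essentially the paper's own argument: the paper expands $K_{EF}(w,\cdot)$ via Equation (\ref{Eq:KEF-IJ}), pairs it with a general $fF$ using the reproducing property and the isometries, and reads off that the cross term $E^{*}(w)\langle fF, \mathcal{J}(K_{F}(w,\cdot))\rangle_{EF}$ must vanish wherever $E^{*}(w)\neq 0$, finishing by density of the corresponding kernels --- the same cancellation mechanism as yours, only with the exceptional set $\{E^{*}=0\}$ in place of $\{F=0\}$ and with a general $f$ in one slot rather than a second kernel. Your conjugate bookkeeping does contain the slip you anticipated (the left side should be $\overline{E^{*}(w)K_{F}(v,w)} = E(\bar w)\,\overline{K_{F}(v,w)}$, which then exactly matches the $\mathcal{J}$--$\mathcal{J}$ term $E(\bar w)K_{F}(w,v)$), but once corrected the cancellation goes through as you describe.
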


\begin{proof}
For $f \in \mathcal{H}(E)$, $w \in \mathbb{C}$, we have
\begin{align*}
[\mathcal{I} (f)](w) &= f(w) F(w)  \\
&= \langle fF, K_{EF}(w, \cdot) \rangle_{EF} \\
&= \langle fF, \overline{F(w)} F K_{E}(w, \cdot) + E(\overline{w}) E^{*} K_{F}(w,\cdot) \rangle_{EF} \\
&= \langle fF, \overline{F(w)} F K_{E}(w, \cdot)  \rangle_{EF}  + \langle fF,  E(\overline{w}) E^{*} K_{F}(w,\cdot) \rangle_{EF} \\
&= F(w) \langle f, K_{E}(s, \cdot)\rangle_{E} + E^{*}(w) \langle fF,  E^{*} K_{F}(w,\cdot) \rangle_{EF} \\
&= f(w) F(w) + E^{*}(w) \langle fF,   \mathcal{J}( K_{F}(w,\cdot)) \rangle_{EF},
\end{align*}
from which it follows that $\mathcal{J}(K_{F}(w,\cdot))$ is orthogonal to $\mathcal{I}(\mathcal{H}(E))$ for any $w$ with $E^{*}(w) \neq 0$.  Since this collection has dense span in $\mathcal{J}(\mathcal{H}(F))$, the proof is complete.
\end{proof}

\begin{rem} \label{rem:H(EF) decomposition}
The images of $\mathcal{I}$ and $\mathcal{J}$ together span $\mathcal{H}(EF)$, as a consequence of Equation (\ref{Eq:KEF-IJ}).  Thus, for every $h \in \mathcal{H}(EF)$, $h = f F + g E^{*}$ for unique $f \in \mathcal{H}(E)$ and $g \in \mathcal{H}(F)$.  Let $P_{E}$ be the orthogonal projection of $\mathcal{H}(EF)$ onto the image of $\mathcal{I}$, and $P_{F}$ the projection onto the image of $\mathcal{J}$.  We have that $P_{E}(fF + gE^{*}) = fF$, and $P_{F} (fF + gE^{*}) = gE^{*}$.
\end{rem}

\begin{theorem} \label{T:PF1}
Suppose $\{ \lambda_{n} \} \subset \mathbb{R}$ is such that $\varphi_{EF}(\lambda_{n}) = n \pi + \alpha$ for some $\alpha \in [0,\pi)$ with the property that $\{ K_{EF}(\lambda_{n}, \cdot) \}$ is complete in $\mathcal{H}(EF)$.  Then
\[  \left\{ \dfrac{ \overline{F(\lambda_{n})} K_{E}(\lambda_{n}, \cdot)}{ \sqrt{K_{EF}( \lambda_{n}, \lambda_{n} )} } \right\}_{n \in \mathbb{Z}} \]
is a Parseval frame for $\mathcal{H}(E)$.  Consequently, for $f \in \mathcal{H}(E)$,
\begin{equation} \label{Eq:PFHE}
f(z) = \sum_{n} f(\lambda_{n}) \dfrac{ | F(\lambda_{n})|^2 K_{E}(\lambda_{n}, z) }{ K_{EF}( \lambda_{n}, \lambda_{n} ) }.
\end{equation}

Likewise,
\[  \left\{ \dfrac{ E(\lambda_{n}) K_{F}(\lambda_{n}, \cdot)}{ \sqrt{K_{EF}( \lambda_{n}, \lambda_{n} )} } \right\}_{n \in \mathbb{Z}} \]
is a Parseval frame for $\mathcal{H}(F)$, and for $g \in \mathcal{H}(F)$,
\begin{equation} \label{Eq:PFHF}
g(z) = \sum_{n} g(\lambda_{n}) \dfrac{ | E(\lambda_{n})|^2 K_{F}(\lambda_{n}, z) }{ K_{EF}( \lambda_{n}, \lambda_{n} ) }.
\end{equation}
\end{theorem}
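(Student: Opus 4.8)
The plan is to reduce the statement to Theorem \ref{thm:Orthonormal Basis in H(E) Theorem} applied to the ambient space $\mathcal{H}(EF)$, and then push the resulting orthonormal basis down to $\mathcal{H}(E)$ (resp. $\mathcal{H}(F)$) through the projection $P_E$ (resp. $P_F$) of Remark \ref{rem:H(EF) decomposition}, invoking the Han--Larson dilation picture (Theorem \ref{thm:Han&Larson-Theorem}). First I would observe that by hypothesis $\varphi_{EF}(\lambda_n) = n\pi + \alpha$, so part (1) of Theorem \ref{thm:Orthonormal Basis in H(E) Theorem} gives that $\{K_{EF}(\lambda_n,\cdot)\}$ is an orthogonal set in $\mathcal{H}(EF)$; together with the assumed completeness this means $\bigl\{ K_{EF}(\lambda_n,\cdot) / \sqrt{K_{EF}(\lambda_n,\lambda_n)} \bigr\}_{n}$ is an orthonormal basis for $\mathcal{H}(EF)$ (this is the content of part (2), with the completeness hypothesis replacing the condition on $e^{i\alpha}E(z) - e^{-i\alpha}E^{*}(z)$).

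Next I would apply the projection $P_E : \mathcal{H}(EF) \to \mathcal{I}(\mathcal{H}(E))$. By Theorem \ref{thm:Han&Larson-Theorem}(i), the image under $P_E$ of an orthonormal basis of $\mathcal{H}(EF)$ is a Parseval frame for $\mathcal{I}(\mathcal{H}(E))$. The key computation is to identify $P_E\bigl(K_{EF}(\lambda_n,\cdot)\bigr)$. Equation (\ref{Eq:KEF-IJ}) gives $K_{EF}(\lambda_n,\cdot) = \overline{F(\lambda_n)}\,\mathcal{I}(K_E(\lambda_n,\cdot)) + E(\overline{\lambda_n})\,\mathcal{J}(K_F(\lambda_n,\cdot))$; since $\lambda_n \in \mathbb{R}$ and $\mathcal{I},\mathcal{J}$ have orthogonal images (Lemma \ref{L:orthogonal}), applying $P_E$ kills the $\mathcal{J}$ term and leaves $P_E\bigl(K_{EF}(\lambda_n,\cdot)\bigr) = \overline{F(\lambda_n)}\,\mathcal{I}(K_E(\lambda_n,\cdot))$. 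Dividing by the normalizing constant $\sqrt{K_{EF}(\lambda_n,\lambda_n)}$ and transporting back through the isometry $\mathcal{I}^{-1}$ (which preserves the Parseval-frame property, being unitary onto its range), we get that $\bigl\{ \overline{F(\lambda_n)} K_E(\lambda_n,\cdot) / \sqrt{K_{EF}(\lambda_n,\lambda_n)} \bigr\}_n$ is a Parseval frame for $\mathcal{H}(E)$, as claimed. The reconstruction formula (\ref{Eq:PFHE}) then follows from the Parseval frame expansion $f = \sum_n \langle f, h_n\rangle_E\, h_n$ with $h_n = \overline{F(\lambda_n)} K_E(\lambda_n,\cdot)/\sqrt{K_{EF}(\lambda_n,\lambda_n)}$, using the reproducing property $\langle f, K_E(\lambda_n,\cdot)\rangle_E = f(\lambda_n)$ and collecting the constants into $|F(\lambda_n)|^2/K_{EF}(\lambda_n,\lambda_n)$. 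The statement for $\mathcal{H}(F)$ is proved identically, using $P_F$ in place of $P_E$ and the $\mathcal{J}$ term of (\ref{Eq:KEF-IJ}).

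The main obstacle I anticipate is purely bookkeeping: making sure the normalization is exactly right, i.e. that the vectors $h_n$ defined above genuinely have $\sum_n |\langle f, h_n\rangle|^2 = \|f\|^2$ with constant $1$ and not some other constant. This hinges on the fact that $P_E$ applied to a genuine orthonormal basis (not just an orthogonal set) yields a Parseval frame — hence the need to divide $K_{EF}(\lambda_n,\cdot)$ by $\sqrt{K_{EF}(\lambda_n,\lambda_n)}$ \emph{before} projecting, rather than normalizing the projected kernels $K_E(\lambda_n,\cdot)$ by their own norms in $\mathcal{H}(E)$ (which would generally give a different, non-Parseval frame). A secondary subtlety is the completeness hypothesis: one must check that completeness of $\{K_{EF}(\lambda_n,\cdot)\}$ in $\mathcal{H}(EF)$ is what lets us upgrade the orthogonal set to an orthonormal basis; this is where Theorem \ref{thm:Orthonormal Basis in H(E) Theorem}(2) is invoked, possibly after noting that the hypothesis $e^{i\alpha}E_{EF}(z)-e^{-i\alpha}E_{EF}^{*}(z)\notin\mathcal{H}(EF)$ is equivalent to (or implied by) completeness here.
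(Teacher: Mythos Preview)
Your proposal is correct and follows essentially the same route as the paper: both arguments use Theorem~\ref{thm:Orthonormal Basis in H(E) Theorem} to get an orthonormal basis of normalized kernels in $\mathcal{H}(EF)$, apply the projection $P_E$ (computed via Equation~(\ref{Eq:KEF-IJ}) and Lemma~\ref{L:orthogonal}) to obtain a Parseval frame in $\mathcal{I}(\mathcal{H}(E))$, and then pull back through the isometry $\mathcal{I}$ to $\mathcal{H}(E)$. The only cosmetic difference is that the paper writes $\mathcal{I}^{*}$ where you write $\mathcal{I}^{-1}$, and it does not cite Han--Larson by name but simply uses that the projection of an orthonormal basis is a Parseval frame.
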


\begin{proof}
We have by Equation (\ref{Eq:KEF-IJ}) and Lemma \ref{L:orthogonal} that
\[ P_{E} \left( \dfrac{ K_{EF}(\lambda_{n}, \cdot )}{ \sqrt{K_{EF}( \lambda_{n}, \lambda_{n} )} } \right) = \dfrac{ \overline{F(\lambda_{n})} \mathcal{I} (K_{E}(\lambda_{n}, \cdot) ) }{ \sqrt{K_{EF}( \lambda_{n}, \lambda_{n} )} } \]
Since our hypotheses imply that
\[  \left\{ \dfrac{ K_{EF}(\lambda_{n}, \cdot)}{ \sqrt{K_{EF}( \lambda_{n}, \lambda_{n} )} } \right\}_{n \in \mathbb{Z}} \]
is an orthonormal basis for $\mathcal{H}(EF)$, it follows that
\[ \left\{ \dfrac{ \overline{F(\lambda_{n})} \mathcal{I} (K_{E}(\lambda_{n}, \cdot) ) }{ \sqrt{K_{EF}( \lambda_{n}, \lambda_{n} )} } \right\}_{n \in \mathbb{Z}} \]
is a Parseval frame for $\mathcal{I}(\mathcal{H}(E))$.  Applying $\mathcal{I}^{*}$, which is an isometry from $\mathcal{I}(\mathcal{H}(E))$ to $\mathcal{H}(E)$, we obtain the first claim.

We now obtain
\begin{align*}
f(z) &= \sum_{n} \left\langle f , \dfrac{ \overline{F(\lambda_{n})} K_{E}(\lambda_{n}, \cdot)}{ \sqrt{K_{EF}( \lambda_{n}, \lambda_{n} )} } \right\rangle_{E} \dfrac{ \overline{F(\lambda_{n})} K_{E}(\lambda_{n}, z)}{ \sqrt{K_{EF}( \lambda_{n}, \lambda_{n} )} } \\
&= \sum_{n} f(\lambda_{n}) \dfrac{ | F(\lambda_{n})|^2 K_{E}(\lambda_{n}, z) }{ K_{EF}( \lambda_{n}, \lambda_{n} ) },
\end{align*}
as required.
\end{proof}

The following theorem shows that the Parseval frames for $\mathcal{H}(E)$ and $\mathcal{H}(F)$ given in Theorem \ref{T:PF1} are orthogonal.
\begin{theorem}
Suppose the hypotheses of Theorem \ref{T:PF1}. For every $f \in \mathcal{H}(E)$,
\begin{equation} \label{Eq:ortho1}
\sum_{n} f(\lambda_{n}) F(\lambda_{n}) \dfrac{ E(\lambda_{n}) K_{F}(\lambda_{n}, \cdot)}{ K_{EF}( \lambda_{n}, \lambda_{n} ) } = 0.
\end{equation}

Likewise, for every $g \in \mathcal{H}(F)$,
\begin{equation} \label{Eq:ortho2}
\sum_{n} g(\lambda_{n}) E^{*}(\lambda_{n}) \dfrac{  \overline{F(\lambda_{n})} K_{E}(\lambda_{n}, \cdot)}{ K_{EF}( \lambda_{n}, \lambda_{n} ) } = 0.
\end{equation}

\end{theorem}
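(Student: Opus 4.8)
The plan is to read the two identities off the orthogonal decomposition $\mathcal{H}(EF) = \mathcal{I}(\mathcal{H}(E)) \oplus \mathcal{J}(\mathcal{H}(F))$ furnished by Lemma \ref{L:orthogonal} and Remark \ref{rem:H(EF) decomposition}: compressing an orthonormal basis of $\mathcal{H}(EF)$ by the two complementary projections $P_{E}, P_{F}$ produces ``orthogonal'' frames for the two summands in the sense recalled in the introduction, and these orthogonality relations then descend to $\mathcal{H}(F)$ and $\mathcal{H}(E)$ through the isometries $\mathcal{J}$ and $\mathcal{I}$.

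First I would put $u_{n} := K_{EF}(\lambda_{n}, \cdot)/\sqrt{K_{EF}(\lambda_{n},\lambda_{n})}$; the hypothesis on $\{\lambda_{n}\}$ together with Theorem \ref{thm:Orthonormal Basis in H(E) Theorem} makes $\{u_{n}\}_{n}$ an orthonormal basis for $\mathcal{H}(EF)$. By Remark \ref{rem:H(EF) decomposition} the projections satisfy $P_{E}+P_{F}=I$, so $\ker P_{F} = \mathcal{I}(\mathcal{H}(E))$ and $\ker P_{E} = \mathcal{J}(\mathcal{H}(F))$. From Equation (\ref{Eq:KEF-IJ}) and Lemma \ref{L:orthogonal}, and using $\lambda_{n}\in\mathbb{R}$ (so that $E(\overline{\lambda_{n}}) = E(\lambda_{n})$), I record
\[ P_{E} u_{n} = \frac{\overline{F(\lambda_{n})}\,\mathcal{I}(K_{E}(\lambda_{n},\cdot))}{\sqrt{K_{EF}(\lambda_{n},\lambda_{n})}}, \qquad P_{F} u_{n} = \frac{E(\lambda_{n})\,\mathcal{J}(K_{F}(\lambda_{n},\cdot))}{\sqrt{K_{EF}(\lambda_{n},\lambda_{n})}}. \]

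For Equation (\ref{Eq:ortho1}), fix $f\in\mathcal{H}(E)$ and observe that $\mathcal{I}(f)=fF$ lies in $\ker P_{F}$. Expanding $fF$ in the orthonormal basis $\{u_{n}\}$ and applying the bounded operator $P_{F}$ term by term,
\[ 0 = P_{F}(fF) = \sum_{n} \langle fF, u_{n}\rangle_{EF}\, P_{F}u_{n}. \]
The reproducing property in $\mathcal{H}(EF)$ gives $\langle fF, u_{n}\rangle_{EF} = f(\lambda_{n})F(\lambda_{n})/\sqrt{K_{EF}(\lambda_{n},\lambda_{n})}$; substituting this together with the formula for $P_{F}u_{n}$, and then using that $\mathcal{J}$ is an isometry to pull it outside the convergent series, one obtains
\[ 0 = \mathcal{J}\!\left( \sum_{n} f(\lambda_{n})F(\lambda_{n})\,\frac{E(\lambda_{n})K_{F}(\lambda_{n},\cdot)}{K_{EF}(\lambda_{n},\lambda_{n})} \right), \]
and injectivity of $\mathcal{J}$ yields Equation (\ref{Eq:ortho1}). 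Equation (\ref{Eq:ortho2}) follows by the mirror-image argument: take $g\in\mathcal{H}(F)$, use $\mathcal{J}(g)=gE^{*}\in\ker P_{E}$, compute $\langle gE^{*}, u_{n}\rangle_{EF} = g(\lambda_{n})E^{*}(\lambda_{n})/\sqrt{K_{EF}(\lambda_{n},\lambda_{n})}$ (with $E^{*}(\lambda_{n}) = \overline{E(\lambda_{n})}$ since $\lambda_{n}$ is real), substitute the formula for $P_{E}u_{n}$, and apply injectivity of $\mathcal{I}$.

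I do not anticipate a genuine obstacle; the entire content has already been assembled in this section. The only two points needing a word of justification are (i) that $P_{E}+P_{F}=I$ — equivalently, that the images of $\mathcal{I}$ and $\mathcal{J}$ together span $\mathcal{H}(EF)$, which is exactly Remark \ref{rem:H(EF) decomposition} — so that $\ker P_{F}$ is precisely $\mathcal{I}(\mathcal{H}(E))$; and (ii) the termwise passage of $P_{F}$ (resp. $P_{E}$) and of $\mathcal{J}$ (resp. $\mathcal{I}$) through the infinite sums, which is legitimate because all these operators are bounded and $\mathcal{I},\mathcal{J}$ are isometries onto closed subspaces, while the orthonormal-basis expansion of $fF$ (resp. $gE^{*}$) converges in $\mathcal{H}(EF)$.
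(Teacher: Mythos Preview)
Your proposal is correct and follows essentially the same approach as the paper's proof: expand $fF$ (respectively $gE^{*}$) in the orthonormal basis $\{u_{n}\}$, use the decomposition of $K_{EF}(\lambda_{n},\cdot)$ from Equation (\ref{Eq:KEF-IJ}), and exploit the orthogonality of $\mathcal{I}(\mathcal{H}(E))$ and $\mathcal{J}(\mathcal{H}(F))$. The only cosmetic difference is that the paper applies $\mathcal{J}^{*}$ (respectively $\mathcal{I}^{*}$) directly to the expansion, whereas you apply $P_{F}=\mathcal{J}\mathcal{J}^{*}$ (respectively $P_{E}=\mathcal{I}\mathcal{I}^{*}$) and then invoke injectivity of $\mathcal{J}$ (respectively $\mathcal{I}$); these are equivalent.
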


\begin{proof}
For every $f \in \mathcal{H}(E)$, we have
\begin{align*}
f(z)F(z)  &= \sum_{n} \left\langle fF , \dfrac{K_{EF}(\lambda_{n}, \cdot)}{ \sqrt{K_{EF}(\lambda_{n}, \lambda_{n})}} \right\rangle_{EF} \dfrac{K_{EF}(\lambda_{n}, \cdot)}{ \sqrt{K_{EF}(\lambda_{n}, \lambda_{n})}} \\
&= \sum_{n} f(\lambda_{n}) F(\lambda_{n}) \dfrac{K_{EF}(\lambda_{n}, \cdot)}{ K_{EF}(\lambda_{n}, \lambda_{n})} \\
&= \sum_{n} f(\lambda_{n}) F(\lambda_{n})  \dfrac{\overline{F(\lambda_{n})} [\mathcal{I}(K_{E}(\lambda_{n},\cdot))](z) + E(\lambda_{n}) [\mathcal{J}(K_{F}(\lambda_{n}, \cdot))](z)}{K_{EF}(\lambda_{n}, \lambda_{n})}. \\
\end{align*}
Note that $\mathcal{J}^{*}(fF) = 0$, so applying $\mathcal{J}^{*}$ to the last line above, we obtain Equation (\ref{Eq:ortho1}).

An analogous argument applying $\mathcal{I}^{*}$ to $gE^{*}$ yields Equation (\ref{Eq:ortho2}).
\end{proof}

For convenience, let $h_{\alpha}(z) = e^{i \alpha} E(z)F(z) - e^{-i \alpha} E^{*}(z) F^{*}(z)$.

\begin{corollary}
Suppose that $\{ K_{E}(\lambda_{n}, \cdot ) \}_{n}$ is a Parseval frame for $\mathcal{H}(E)$ and $F \in \mathcal{HB}$ is such that $\varphi_{EF}(\lambda_{n}) = n \pi + \alpha$ for some $\alpha \in [0, \pi)$.  Then $\mathcal{H}(E)$ can be embedded into the Hilbert space $\mathcal{K}(\alpha)$ such that the Parseval frame is embedded into the orthonormal basis
\[  \left\{ \dfrac{\overline{F(\lambda_{n})} [\mathcal{I}(K_{E}(\lambda_{n},\cdot))](z)}{ \sqrt{K_{EF}(\lambda_{n}, \lambda_{n})}} \ \oplus  \ 
\dfrac{ E(\lambda_{n}) [\mathcal{J}(K_{F}(\lambda_{n}, \cdot))](z)  }{ \sqrt{K_{EF}(\lambda_{n}, \lambda_{n})} }
 \right\}_{n}\] 
where $\mathcal{K}(\alpha)$ is either $\mathcal{H}(EF)$ when $h_{\alpha} \notin \mathcal{H}(EF)$ or $\mathcal{H}(EF) \ominus span\{h_{\alpha} \}$ when $h_{\alpha} \in \mathcal{H}(EF)$.
\end{corollary}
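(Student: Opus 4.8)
The plan is to identify the displayed collection with the normalized reproducing kernels of $\mathcal{H}(EF)$ at the points $\lambda_n$ and then read the statement off from Theorem \ref{T:PF1} and its companion orthogonality theorem, reinterpreted through Han--Larson's dilation theorem. First I would note that, by the Lemma giving $K_{EF}(w,\cdot) = \overline{F(w)}\,\mathcal{I}(K_E(w,\cdot)) + E(\overline w)\,\mathcal{J}(K_F(w,\cdot))$, evaluated at the real point $w = \lambda_n$, the displayed set is precisely $\bigl\{ K_{EF}(\lambda_n,\cdot)/\sqrt{K_{EF}(\lambda_n,\lambda_n)} \bigr\}_n$, written out in the orthogonal decomposition $\mathcal{H}(EF) = \mathcal{I}(\mathcal{H}(E)) \oplus \mathcal{J}(\mathcal{H}(F))$ provided by Lemma \ref{L:orthogonal} and Remark \ref{rem:H(EF) decomposition}. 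Since $\varphi_{EF} = \varphi_E + \varphi_F$ is a phase function for $EF$ and $\varphi_{EF}(\lambda_n) = n\pi + \alpha$, Theorem \ref{thm:Orthonormal Basis in H(E) Theorem} shows this is an orthonormal set in $\mathcal{H}(EF)$, and is an orthonormal basis for $\mathcal{H}(EF)$ exactly when $h_\alpha = e^{i\alpha}(EF) - e^{-i\alpha}(EF)^{*} \notin \mathcal{H}(EF)$.

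When $h_\alpha \notin \mathcal{H}(EF)$, so that $\mathcal{K}(\alpha) = \mathcal{H}(EF)$, the family $\{K_{EF}(\lambda_n,\cdot)\}$ is complete and Theorem \ref{T:PF1} applies directly. Compressing the orthonormal basis by the projection $P_E$ onto $\mathcal{I}(\mathcal{H}(E))$ gives, exactly as in the opening of the proof of Theorem \ref{T:PF1}, the $\mathcal{I}$-summand of the displayed set, which is therefore a Parseval frame for $\mathcal{I}(\mathcal{H}(E))$; transporting by the isometry $\mathcal{I}^{*}$ identifies it with the Parseval frame of kernels for $\mathcal{H}(E)$ produced by Theorem \ref{T:PF1}, which differs from the hypothesized Parseval frame $\{K_E(\lambda_n,\cdot)\}$ only by the scalar weights $\overline{F(\lambda_n)}/\sqrt{K_{EF}(\lambda_n,\lambda_n)}$ of that theorem. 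In the language of item (i) of Han--Larson's theorem this is exactly the assertion that $\mathcal{H}(E) \cong \mathcal{I}(\mathcal{H}(E)) \hookrightarrow \mathcal{K}(\alpha)$ is an embedding under which the Parseval frame is the orthogonal compression of the displayed orthonormal basis; the orthogonality theorem following Theorem \ref{T:PF1} simultaneously identifies the complementary compression $P_F$ with the orthogonal Parseval frame carried by $\mathcal{J}(\mathcal{H}(F)) = \mathcal{I}(\mathcal{H}(E))^{\perp}$, which is what the ``$\oplus$'' in the displayed basis records.

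When $h_\alpha \in \mathcal{H}(EF)$ I would instead take $\mathcal{K}(\alpha) = \mathcal{H}(EF) \ominus \operatorname{span}\{h_\alpha\}$. Here $\varphi_{EF}(\lambda_n) = n\pi + \alpha$ forces $h_\alpha(\lambda_n) = |E(\lambda_n)F(\lambda_n)|\,\bigl(e^{-in\pi} - e^{in\pi}\bigr) = 0$ for every $n$, so $\langle h_\alpha, K_{EF}(\lambda_n,\cdot)\rangle_{EF} = 0$ and the normalized kernels already lie in $\mathcal{K}(\alpha)$; the remaining point --- that they span all of $\mathcal{K}(\alpha)$, not merely a proper subspace --- is the de Branges counterpart of part (2) of Theorem \ref{thm:Orthonormal Basis in H(E) Theorem} in the presence of the exceptional function $h_\alpha$ (see \cite{dB68a}). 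Granting this, they form an orthonormal basis for $\mathcal{K}(\alpha)$, and one repeats the argument of the $h_\alpha \notin \mathcal{H}(EF)$ case inside $\mathcal{K}(\alpha)$ in place of $\mathcal{H}(EF)$ --- using that $\mathcal{I}$ and $\mathcal{J}$ still have orthogonal images and that an isometric copy of $\mathcal{H}(E)$ sits inside $\mathcal{K}(\alpha)$ --- to obtain the same conclusion.

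I expect this last case to be the main obstacle: one must verify both that the normalized kernels are complete in the codimension-one subspace $\mathcal{H}(EF) \ominus \operatorname{span}\{h_\alpha\}$, and that the Han--Larson compression picture genuinely transfers from $\mathcal{H}(EF)$ down to $\mathcal{K}(\alpha)$ --- equivalently, that the relevant isometric copy of $\mathcal{H}(E)$ lands inside $\mathcal{K}(\alpha)$ and not merely inside $\mathcal{H}(EF)$, so that the compressions remain Parseval rather than becoming merely frames. This is where the structure of de Branges spaces, rather than the purely frame-theoretic bookkeeping of Section 3, does the real work; the case $h_\alpha \notin \mathcal{H}(EF)$ is, by contrast, essentially a restatement of Theorem \ref{T:PF1} together with its orthogonality companion.
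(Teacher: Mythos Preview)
The paper gives no explicit proof for this corollary; it is stated immediately after the orthogonality theorem and left as an evident consequence of the surrounding results. Your proposal is exactly the argument the paper is implicitly invoking: identify the displayed family with the normalized kernels $K_{EF}(\lambda_n,\cdot)/\sqrt{K_{EF}(\lambda_n,\lambda_n)}$ via Equation~(\ref{Eq:KEF-IJ}), use Theorem~\ref{thm:Orthonormal Basis in H(E) Theorem} to get the orthonormal basis (with the $h_\alpha$ dichotomy determining $\mathcal{K}(\alpha)$), and then read off the Naimark picture from Lemma~\ref{L:orthogonal}, Remark~\ref{rem:H(EF) decomposition}, and Theorem~\ref{T:PF1}.

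Two of your side observations are worth keeping, since the paper does not address them. First, the frame you recover by compressing the orthonormal basis to $\mathcal{I}(\mathcal{H}(E))$ is the \emph{weighted} Parseval frame $\bigl\{\overline{F(\lambda_n)}K_E(\lambda_n,\cdot)/\sqrt{K_{EF}(\lambda_n,\lambda_n)}\bigr\}$ of Theorem~\ref{T:PF1}, not literally $\{K_E(\lambda_n,\cdot)\}$; the corollary's phrase ``the Parseval frame is embedded'' should be read in this Naimark sense (projection of the basis gives a Parseval frame of the same kernels, up to unimodular-or-scalar weights), and the paper is silent on reconciling the two. Second, your caution about the $h_\alpha\in\mathcal{H}(EF)$ case---that one must check $\mathcal{I}(\mathcal{H}(E))\subset\mathcal{K}(\alpha)$ and completeness of the kernels in $\mathcal{K}(\alpha)$---is legitimate; the paper simply asserts the dichotomy in the definition of $\mathcal{K}(\alpha)$ without further comment, relying on the de~Branges theory in \cite{dB68a} as you indicate.
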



\begin{corollary} \label{cor:naimark}
Assume the hypotheses of Theorem \ref{thm:main theorem of E and F}.  Then the sequence
\begin{equation} \label{Eq:RBS2}
\left\{ \dfrac{ \overline{F(\lambda_{n})} F(\cdot) K_{E_{0}}(\lambda_{n}, \cdot)}{\sqrt{K_{EF}(\lambda_{n}, \lambda_{n})}}
\oplus
\dfrac{ E(\lambda_{n}) E^{*}_{0}(\cdot) K_{F}(\lambda_{n}, \cdot)}{\sqrt{K_{EF}(\lambda_{n}, \lambda_{n})}}
\right\}
\end{equation}
is a Riesz basis in $\mathcal{K}_{0} \subset \mathcal{H}(E_{0}F)$.
\end{corollary}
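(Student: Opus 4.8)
The plan is to deduce this from the preceding Corollary — which is exactly Naimark dilation of the \emph{Parseval} frame $\{K_E(\lambda_n,\cdot)\}_n$ — and then to carry the resulting orthonormal basis over into $\mathcal{H}(E_0F)$ by a single bounded, boundedly invertible operator. First I would assemble the ingredients furnished by the earlier results. Theorem~\ref{thm:main theorem of E and F} gives $\mathcal{H}(E)=\mathcal{H}(E_0)$ with equivalent norms and identifies $\Lambda$ as the zero set of $EF+E^{*}F^{*}$; Theorem~\ref{thm:similarity} gives that $\{K_E(\lambda_n,\cdot)\}_n$ is a Parseval frame for $\mathcal{H}(E)$ and is the canonical dual of the frame $\{K_{E_0}(\lambda_n,\cdot)\}_n$ in $\mathcal{H}(E_0)$, so that, writing $S$ for the (bounded, boundedly invertible) frame operator of $\{K_{E_0}(\lambda_n,\cdot)\}_n$ on $\mathcal{H}(E_0)$, we have $SK_E(\lambda_n,\cdot)=K_{E_0}(\lambda_n,\cdot)$ for each $n$. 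I would then observe that the hypotheses of the preceding Corollary hold with $\alpha=\pi/2$: as we may assume without loss of generality, $E_0$ and hence $E$ have no real zeros, and then the construction of $F$ in Theorem~\ref{thm:main theorem of E and F} forces $EF$ to have no real zeros either; since $EF+E^{*}F^{*}=2\,\mathrm{Re}(EF)=2|EF|\cos\varphi_{EF}$ on $\mathbb{R}$, the zero set of $EF+E^{*}F^{*}$ is exactly $\{x\in\mathbb{R}:\varphi_{EF}(x)\in \tfrac{\pi}{2}+\pi\mathbb{Z}\}$, so by continuity and strict monotonicity of $\varphi_{EF}$ we may enumerate $\Lambda$ so that $\varphi_{EF}(\lambda_n)=n\pi+\tfrac{\pi}{2}$.

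With this in hand, the preceding Corollary produces an orthonormal basis
\[ u_n \;=\; \dfrac{\overline{F(\lambda_n)}\,\mathcal{I}\!\big(K_E(\lambda_n,\cdot)\big)}{\sqrt{K_{EF}(\lambda_n,\lambda_n)}}\;\oplus\;\dfrac{E(\lambda_n)\,\mathcal{J}\!\big(K_F(\lambda_n,\cdot)\big)}{\sqrt{K_{EF}(\lambda_n,\lambda_n)}} \]
for $\mathcal{K}(\pi/2)$, where $\mathcal{K}(\pi/2)$ is $\mathcal{H}(EF)$ or a codimension-one subspace of it, and $\mathcal{I}(K_E(\lambda_n,\cdot))=F\,K_E(\lambda_n,\cdot)$, $\mathcal{J}(K_F(\lambda_n,\cdot))=E^{*}K_F(\lambda_n,\cdot)$. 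Next I would set up the analogous isometries into $\mathcal{H}(E_0F)$, namely $\mathcal{I}_0\colon\mathcal{H}(E_0)\to\mathcal{H}(E_0F)$, $f\mapsto fF$, and $\mathcal{J}_0\colon\mathcal{H}(F)\to\mathcal{H}(E_0F)$, $g\mapsto gE_0^{*}$; by Lemma~\ref{L:orthogonal} and Remark~\ref{rem:H(EF) decomposition} (applied with $E_0$ in place of $E$) their ranges are orthogonal and together span $\mathcal{H}(E_0F)$, paralleling $\mathcal{H}(EF)=\mathcal{I}(\mathcal{H}(E))\oplus\mathcal{J}(\mathcal{H}(F))$. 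Then I would define $W:=W_1\oplus W_2$, where $W_1:=\mathcal{I}_0\,S\,\mathcal{I}^{-1}$ on $\mathcal{I}(\mathcal{H}(E))$ and $W_2:=\mathcal{J}_0\,\mathcal{J}^{-1}$ on $\mathcal{J}(\mathcal{H}(F))$ — the composition $S\,\mathcal{I}^{-1}$ being legitimate because $\mathcal{H}(E)$ and $\mathcal{H}(E_0)$ are the same set. Since $S$ has a bounded inverse and the two norms on that common set are equivalent, $W_1$ is a topological isomorphism onto $\mathcal{I}_0(\mathcal{H}(E_0))$ and $W_2$ is an isometric isomorphism onto $\mathcal{J}_0(\mathcal{H}(F))$, whence $W\colon\mathcal{H}(EF)\to\mathcal{H}(E_0F)$ is bounded and boundedly invertible. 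One then computes $W_1(\mathcal{I}(K_E(\lambda_n,\cdot)))=\mathcal{I}_0(SK_E(\lambda_n,\cdot))=\mathcal{I}_0(K_{E_0}(\lambda_n,\cdot))=F\,K_{E_0}(\lambda_n,\cdot)$ and $W_2(\mathcal{J}(K_F(\lambda_n,\cdot)))=\mathcal{J}_0(K_F(\lambda_n,\cdot))=E_0^{*}\,K_F(\lambda_n,\cdot)$, so $Wu_n$ is precisely the $n$-th term of~(\ref{Eq:RBS2}). Setting $\mathcal{K}_0:=W(\mathcal{K}(\pi/2))\subseteq\mathcal{H}(E_0F)$ and using that the image of an orthonormal basis under a bounded, boundedly invertible operator is a Riesz basis, the sequence~(\ref{Eq:RBS2}) is a Riesz basis for $\mathcal{K}_0$.

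Most of this is bookkeeping: the isometry of $\mathcal{I}_0,\mathcal{J}_0$, the orthogonal decomposition of $\mathcal{H}(E_0F)$, and the two displayed identities for $W_1,W_2$ are immediate from the earlier lemmas. The point that needs genuine care — and where I expect the main friction — is keeping the two ambient de Branges spaces $\mathcal{H}(EF)$ and $\mathcal{H}(E_0F)$ strictly apart: one should \emph{not} attempt to identify them as sets, but instead verify boundedness and bounded invertibility of $W$ summand by summand, using only the given equivalence $\mathcal{H}(E)\simeq\mathcal{H}(E_0)$ together with the isometry properties of $\mathcal{I},\mathcal{J},\mathcal{I}_0,\mathcal{J}_0$. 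A smaller technical point is justifying the value $\alpha=\pi/2$ and the correct enumeration of $\Lambda$, which rests on the no-real-zeros normalization (which propagates to $F$ via the construction in Theorem~\ref{thm:main theorem of E and F}) and on strict monotonicity of the phase function $\varphi_{EF}$.
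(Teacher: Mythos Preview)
Your proposal is correct and follows essentially the same route as the paper's proof. The paper too constructs a bounded, boundedly invertible map $\mathfrak{J}\colon\mathcal{H}(EF)\to\mathcal{H}(E_0F)$ by sending $fF\mapsto \mathcal{S}(f)F$ and $gE^{*}\mapsto gE_0^{*}$, where $\mathcal{S}$ is the similarity between $\{K_E(\lambda_n,\cdot)\}$ and $\{K_{E_0}(\lambda_n,\cdot)\}$ obtained from Theorem~\ref{thm:similarity}; this is exactly your $W=W_1\oplus W_2$ with your $S$ (the frame operator, which realizes that similarity via canonical duality) playing the role of $\mathcal{S}$, and the paper likewise sets $\mathcal{K}_0=\mathfrak{J}(\mathcal{K}(\pi/2))$ after noting that $\Lambda$ corresponds to $\alpha=\pi/2$.
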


\begin{proof}
By the proof of Theorem \ref{thm:similarity}, the sequence
\begin{equation} \label{Eq:E0-kern} 
\left\{ \dfrac{\overline{F(\lambda_{n})} K_{E_{0}}(\lambda_{n}, \cdot)}{\sqrt{K_{EF}(\lambda_{n}, \lambda_{n})}} \right\}
\end{equation}
is a frame in $\mathcal{H}(E)$, and is similar to the frame
\begin{equation} \label{Eq:E-kern}
\left\{ \dfrac{\overline{F(\lambda_{n})} K_{E}(\lambda_{n}, \cdot)}{\sqrt{K_{EF}(\lambda_{n}, \lambda_{n})}} \right\}
\end{equation}
Therefore, there exists an invertible operator $\mathcal{S} : \mathcal{H}(E) \to \mathcal{H}(E)$ that maps (\ref{Eq:E-kern}) to (\ref{Eq:E0-kern}).  Define a mapping $\mathfrak{J}: \mathcal{H}(EF) \to \mathcal{H}(E_{0}F)$ as follows:  for $f \in \mathcal{H}(E)$, $\mathfrak{J}(fF) = \mathcal{S}(f) F$, and for $g \in \mathcal{H}(F)$, $\mathfrak{J}(gE^{*}) = gE_{0}^{*}$.  This extends by linearity to $\mathcal{H}(EF)$.  By the orthogonality guaranteed by Lemma \ref{L:orthogonal},
\[ \| \mathfrak{J}(fF + gE^{*}) \|_{E_{0}F}^2 = \| \mathcal{S}(f) \|_{E_{0}}^2 + \| g \|_{F}^{2} \simeq \| \mathcal{S}(f) \|_{E}^{2} + \| g \|_{F}^{2} \simeq \| f \|_{E}^{2} + \| g \|_{F}^{2} = \| fF + gE^{*} \|_{EF}^2, \]
whence $\mathfrak{J}$ is continuous with closed range.  By Remark \ref{rem:H(EF) decomposition}, $\mathfrak{J}$ is onto, and hence is invertible.  Let $\mathcal{K}_{0} = \mathfrak{J}( \mathcal{K}(\pi/2))$.

From the proof of Theorem \ref{thm:main theorem of E and F}, the sequence $\{ \lambda_{n} \}$ coincides with the set $\{ t | \varphi_{EF}(t) = nt + \frac{\pi}{2} \}$, so the sequence
\[  \left\{ \dfrac{\overline{F(\lambda_{n})} [\mathcal{I}(K_{E}(\lambda_{n},\cdot))](z)}{ \sqrt{K_{EF}(\lambda_{n}, \lambda_{n})}} \ \oplus  \ 
\dfrac{ E(\lambda_{n}) [\mathcal{J}(K_{F}(\lambda_{n}, \cdot))](z)  }{ \sqrt{K_{EF}(\lambda_{n}, \lambda_{n})} }
 \right\}_{n}\] 
is an orthonormal basis of $\mathcal{K}(\pi/2)$ by Theorem \ref{thm:Orthonormal Basis in H(E) Theorem}.  We apply $\mathfrak{J}$ to this sequence to obtain
\[  \left\{ \dfrac{\overline{F(\lambda_{n})} F(\cdot) K_{E_{0}}(\lambda_{n},\cdot)}{ \sqrt{K_{EF}(\lambda_{n}, \lambda_{n})}} \ \oplus  \ 
\dfrac{ E(\lambda_{n}) E^{*}_{0}(\cdot) K_{F}(\lambda_{n}, \cdot)  }{ \sqrt{K_{EF}(\lambda_{n}, \lambda_{n})} }
 \right\}_{n}\] 
which is a Riesz basis for its span, which is $\mathcal{K}_{0}$.
\end{proof}

Thus, what we have here is $\mathcal{H}(E_{0})$ embedded into the larger space $\mathcal{K}_{0}$, and the frame $\{ K_{E_{0}}(\lambda_{n}, \cdot) \}_{n}$ embedded into the Riesz basis in (\ref{Eq:RBS2}).

\subsection{Multiplexing the Sampled Functions}

Multiplexing refers to the transmission of several signals simultaneously over a single communications channel.  Generically, multiplexing occurs when two (or more) signals $x$ and $y$ are encoded into $X$ and $Y$ in such a way that $x$ and $y$ can each be recovered from $X+Y$.  The  signals we consider here are elements of a de Branges space and the encoding involves the sampling of the signal.  Specifically, if $f \in \mathcal{H}(E)$ and $g \in \mathcal{H}(F)$, we encode both $f$ and $g$ into the \emph{multiplexed samples}:
\[ \{ f(\lambda_{n}) F(\lambda_{n}) + g(\lambda_{n}) E^{*}(\lambda_{n}) \}_{n} \]
which are transmitted in some fashion.  The goal then is to recover $f$ and $g$ from these mixed samples.

Consider the following toy example.  Suppose we have two bandlimited functions $f,g$ to transmit over a channel, where the band is $(-\pi, \pi)$.  We can modulate $f$ to obtain $\tilde{f}(x) = e^{- i \pi x} f(x)$ and $g$ to obtain $\tilde{g}(x) = e^{ i \pi x} g(x)$.  Then $\tilde{f}$ and $\tilde{g}$ are orthogonal in the space of bandlimited functions with the band $(-2\pi, 2\pi)$.  Therefore, we can encode $f$ and $g$ via the multiplexed samples
\[ \{ \tilde{f}(\frac{n}{2}) + \tilde{g}(\frac{n}{2}) \}_{n} \]
and recover $\tilde{f} + \tilde{g}$ from those samples.  Given $\tilde{f} + \tilde{g}$, we can project onto the subspace of bandlimited functions with band $(-2 \pi, 0)$ to recover $\tilde{f}$, and then unmodulate to obtain $f$.  Similarly, $g$ can be recovered from the multiplexed samples.

\begin{corollary} \label{C:multiplex}
Suppose the hypotheses of Theorem \ref{T:PF1}, and $f \in \mathcal{H}(E)$ and $g \in \mathcal{H}(F)$.  Given the samples $\{ f(\lambda_{n}) \}$ and $\{g(\lambda_{n})\}$, $f$ and $g$ can be reconstructed from the multiplexed samples as follows:
\begin{align}
f(z) &=  \sum_{n} \left( f(\lambda_{n}) F(\lambda_{n}) + g(\lambda_{n}) E^{*}(\lambda_{n}) \right) \dfrac{ \overline{F(\lambda_{n})} K_{E} (\lambda_{n},z) }{K_{EF}(\lambda_{n}, \lambda_{n} ) } \label{Eq:multiplex1} \\
g(z) &=  \sum_{n} \left( f(\lambda_{n}) F(\lambda_{n}) + g(\lambda_{n}) E^{*}(\lambda_{n}) \right) \dfrac{ E(\lambda_{n}) K_{F} (\lambda_{n},z) }{K_{EF}(\lambda_{n}, \lambda_{n} ) }.  \label{Eq:multiplex2}
\end{align}
\end{corollary}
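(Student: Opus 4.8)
The plan is to substitute the multiplexed coefficients $c_{n} := f(\lambda_{n})F(\lambda_{n}) + g(\lambda_{n})E^{*}(\lambda_{n})$ into the right-hand sides of Equations~(\ref{Eq:multiplex1}) and~(\ref{Eq:multiplex2}), split each resulting series into an ``$f$-part'' and a ``$g$-part'' by linearity, and then recognize one part as a reconstruction formula from Theorem~\ref{T:PF1} and the other part as one of the vanishing cross-sums from the orthogonality theorem immediately following it.

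First I would record the convergence fact that makes the splitting legitimate. By Theorem~\ref{T:PF1}, the sequence $\{\,\overline{F(\lambda_{n})}K_{E}(\lambda_{n},\cdot)/\sqrt{K_{EF}(\lambda_{n},\lambda_{n})}\,\}_{n}$ is a Parseval frame for $\mathcal{H}(E)$, so the frame coefficients of $f$ against it, namely $\{f(\lambda_{n})F(\lambda_{n})/\sqrt{K_{EF}(\lambda_{n},\lambda_{n})}\}_{n}$, form an $\ell^{2}$ sequence; symmetrically $\{g(\lambda_{n})E^{*}(\lambda_{n})/\sqrt{K_{EF}(\lambda_{n},\lambda_{n})}\}_{n}\in\ell^{2}$. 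Hence $\{c_{n}/\sqrt{K_{EF}(\lambda_{n},\lambda_{n})}\}_{n}\in\ell^{2}$, the series on the right of~(\ref{Eq:multiplex1}) is the synthesis of that $\ell^{2}$ sequence against a Bessel family and so converges in $\mathcal{H}(E)$, and it may therefore be broken into the two sub-series indexed by the two summands of $c_{n}$. For the right-hand side of~(\ref{Eq:multiplex1}), the $f$-part is
\[ \sum_{n} f(\lambda_{n})F(\lambda_{n})\,\overline{F(\lambda_{n})}\,\frac{K_{E}(\lambda_{n},z)}{K_{EF}(\lambda_{n},\lambda_{n})} \;=\; \sum_{n} f(\lambda_{n})\,|F(\lambda_{n})|^{2}\,\frac{K_{E}(\lambda_{n},z)}{K_{EF}(\lambda_{n},\lambda_{n})} \;=\; f(z) \]
by Equation~(\ref{Eq:PFHE}), while the $g$-part is $\sum_{n} g(\lambda_{n})E^{*}(\lambda_{n})\,\overline{F(\lambda_{n})}\,K_{E}(\lambda_{n},z)/K_{EF}(\lambda_{n},\lambda_{n}) = 0$ by Equation~(\ref{Eq:ortho2}). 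This establishes~(\ref{Eq:multiplex1}).

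Equation~(\ref{Eq:multiplex2}) then follows by the symmetric computation: splitting its right-hand side, the $f$-part $\sum_{n} f(\lambda_{n})F(\lambda_{n})\,E(\lambda_{n})\,K_{F}(\lambda_{n},z)/K_{EF}(\lambda_{n},\lambda_{n})$ vanishes by Equation~(\ref{Eq:ortho1}), and the $g$-part equals $\sum_{n} g(\lambda_{n})\,|E(\lambda_{n})|^{2}\,K_{F}(\lambda_{n},z)/K_{EF}(\lambda_{n},\lambda_{n}) = g(z)$ by Equation~(\ref{Eq:PFHF}), where I use $E^{*}(\lambda_{n})E(\lambda_{n}) = |E(\lambda_{n})|^{2}$ since $\lambda_{n}\in\mathbb{R}$. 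The only genuinely non-formal point, and hence the ``hard part'' of an otherwise bookkeeping argument, is the justification that the multiplexed series may be split term by term --- that is, the separate $\ell^{2}$-membership of the $f$- and $g$-coefficient sequences --- which, as noted above, is immediate from the Parseval property supplied by Theorem~\ref{T:PF1}.
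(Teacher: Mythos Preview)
Your proposal is correct and follows exactly the approach the paper intends: the paper's proof consists of a single sentence stating that the two formulas follow immediately from Equations~(\ref{Eq:PFHE})--(\ref{Eq:ortho2}), and you have simply written out that splitting explicitly, together with a careful (and welcome) justification that the combined series may be separated term by term via the $\ell^{2}$/Bessel argument.
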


\begin{proof}
Equations (\ref{Eq:multiplex1}) and (\ref{Eq:multiplex2}) follow immediately from Equations (\ref{Eq:PFHE} - \ref{Eq:ortho2}).
\end{proof}

\begin{remark}
We can apply Corollary \ref{C:multiplex} to our toy example as follows.  We let $E_{0}, E, F = e^{-i \pi z}$.  Since $EF = e^{- 2 i \pi z}$, we can sample functions in $\mathcal{H}(EF)$ at the half-integers (i.e. choose $\alpha = 0$), so for $f,g \in \mathcal{H}(e^{-i \pi z})$, the multiplexed samples are
\[ \{ e^{- i \pi \frac{n}{2} } f(\frac{n}{2}) + e^{ i \pi \frac{n}{2}} g(\frac{n}{2}) \} \]
which correspond exactly to the multiplexed samples of $\tilde{f} + \tilde{g}$.

Thus, we can view the embedding of $\mathcal{H}(E)$ into $\mathcal{H}(EF)$ as corresponding to a shift in the frequency band.
\end{remark}

\section{Sufficient Conditions}

The sufficiency of the Theorem \ref{thm:ocs} is more subtle, since we require a certain compatibility condition between $E$ and $F$.  The reason this is so is because of the lack of the Plancherel-Polya inequality in general for $\mathcal{H}(E)$.  See \cite{LS02a,aSW14a} for some discussion concerning the Plancherel-Polya inequality.  As we stated before, we can without loss of generality, restrict our attention to functions in the class $\mathcal{HB}$ which have no real roots.

\begin{theorem} \label{Th:sufficiency}
Suppose that $E_{0}, E, F \in \mathcal{HB}$ have no real roots such that $\mathcal{H}(E_{0}) = \mathcal{H}(E)$, and $\varphi_{F}^{\prime} \lesssim \varphi_{E}^{\prime}$ .  Suppose $\{ \lambda_{n} \}$ satisfies the equation $\varphi_{EF}(\lambda_{n}) = n \pi + \alpha$ for some $\alpha \in [0, \pi)$.  Then the sequence $\{ \lambda_{n} \}$ is a normalized sampling sequence for $\mathcal{H}(E_{0})$.
\end{theorem}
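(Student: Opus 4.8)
The plan is to transfer the problem to the larger space $\mathcal{H}(EF)$, where the orthogonality machinery of Section~3 applies, and then to compare normalization constants by means of the phase-function identity $K_G(x,x)=\tfrac1\pi\varphi_G'(x)|G(x)|^2$. First I reduce to $\mathcal{H}(E)$: since $\mathcal{H}(E_0)=\mathcal{H}(E)$ with equivalent norms and neither function has real zeros, the squared norms of the point evaluations are comparable, so $K_{E_0}(\lambda_n,\lambda_n)\simeq K_E(\lambda_n,\lambda_n)$, and it suffices to produce constants $\widetilde A,\widetilde B>0$ with $\widetilde A\|f\|_E^2\le\sum_n |f(\lambda_n)|^2/K_E(\lambda_n,\lambda_n)\le\widetilde B\|f\|_E^2$ for all $f\in\mathcal{H}(E)$.

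Write $\mathcal{I}\colon\mathcal{H}(E)\to\mathcal{H}(EF)$, $f\mapsto fF$, for the isometry of Section~3 and set $e_n:=K_{EF}(\lambda_n,\cdot)/\sqrt{K_{EF}(\lambda_n,\lambda_n)}$. By Theorem~\ref{thm:Orthonormal Basis in H(E) Theorem} the $\{e_n\}$ form an orthonormal system in $\mathcal{H}(EF)$ (and, away from the exceptional case below, an orthonormal basis). Since $\mathcal{I}$ is isometric and $\langle fF,e_n\rangle_{EF}=f(\lambda_n)F(\lambda_n)/\sqrt{K_{EF}(\lambda_n,\lambda_n)}$, Bessel's inequality gives
\[
\sum_n\frac{|F(\lambda_n)|^2\,|f(\lambda_n)|^2}{K_{EF}(\lambda_n,\lambda_n)}=\sum_n\bigl|\langle fF,e_n\rangle_{EF}\bigr|^2\le\|fF\|_{EF}^2=\|f\|_E^2 ,
\]
with equality when $\{e_n\}$ is complete (this is exactly the Parseval-frame identity of Theorem~\ref{T:PF1} transported back by $\mathcal{I}^*$). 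Choosing phase functions so that $\varphi_{EF}=\varphi_E+\varphi_F$, hence $\varphi_{EF}'=\varphi_E'+\varphi_F'>0$ pointwise, the identity $K_G(x,x)=\tfrac1\pi\varphi_G'(x)|G(x)|^2$ applied to $E$, $F$, and $EF$ yields the pointwise comparison
\[
\frac{|F(\lambda_n)|^2}{K_{EF}(\lambda_n,\lambda_n)}=\frac{\pi}{\varphi_{EF}'(\lambda_n)\,|E(\lambda_n)|^2}=\frac{\varphi_E'(\lambda_n)}{\varphi_E'(\lambda_n)+\varphi_F'(\lambda_n)}\cdot\frac{1}{K_E(\lambda_n,\lambda_n)} .
\]

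Now the hypothesis $\varphi_F'\lesssim\varphi_E'$ --- and this is the only place it enters --- gives $\varphi_F'\le C\varphi_E'$ for some $C>0$, so the scalar factor above lies in $[\tfrac1{1+C},1)$. Multiplying by $|f(\lambda_n)|^2$ and summing: from $1/K_E(\lambda_n,\lambda_n)\le(1+C)\,|F(\lambda_n)|^2/K_{EF}(\lambda_n,\lambda_n)$ and the Bessel bound we obtain the upper (Plancherel--P\'olya-type) inequality $\sum_n|f(\lambda_n)|^2/K_E(\lambda_n,\lambda_n)\le(1+C)\|f\|_E^2$; and from $|F(\lambda_n)|^2/K_{EF}(\lambda_n,\lambda_n)\le 1/K_E(\lambda_n,\lambda_n)$ together with the equality case we obtain the lower inequality $\sum_n|f(\lambda_n)|^2/K_E(\lambda_n,\lambda_n)\ge\|f\|_E^2$. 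With the reduction this proves the theorem. Observe that the lower bound used only $\varphi_F'>0$ and completeness of $\{e_n\}$: the compatibility condition is precisely what is needed to turn the automatic one-sided Bessel bound in $\mathcal{H}(EF)$ into a genuine two-sided sampling inequality in $\mathcal{H}(E)$, compensating for the possible failure of Plancherel--P\'olya there.

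It remains to address the exceptional case in which $h_\alpha:=e^{i\alpha}EF-e^{-i\alpha}E^*F^*\in\mathcal{H}(EF)$; by the de~Branges uniqueness this occurs for at most one $\alpha\bmod\pi$. Then $\{e_n\}$ is only an orthonormal basis of $\mathcal{K}(\alpha)=\mathcal{H}(EF)\ominus\mathrm{span}\{h_\alpha\}$, so $\sum_n|\langle fF,e_n\rangle|^2=\|fF\|_{EF}^2-|\langle fF,h_\alpha\rangle|^2/\|h_\alpha\|_{EF}^2$, and to keep a positive lower bound one checks $h_\alpha\notin\mathcal{I}(\mathcal{H}(E))$: this is immediate if $F$ has a zero $z_0$ (necessarily in $\mathbb{C}^-$), since then $h_\alpha(z_0)=-e^{-i\alpha}E^*(z_0)F^*(z_0)\ne0$ so $F$ does not divide $h_\alpha$, and in the zero-free case it follows from the bounded-type side conditions defining $\mathcal{H}(E)$; the resulting defect being one-dimensional, $P_{\mathcal{K}(\alpha)}$ is then bounded below on $\mathcal{I}(\mathcal{H}(E))$ and the lower bound survives with a smaller constant. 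I expect this exceptional-case bookkeeping to be the only genuinely delicate point; the substantive step is the normalization comparison above, which is exactly where the hypothesis $\varphi_F'\lesssim\varphi_E'$ does its work.
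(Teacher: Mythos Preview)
Your main argument is essentially the paper's: both transfer to $\mathcal{H}(EF)$ via $\mathcal{I}$, use the Parseval identity coming from the orthonormal basis of normalized kernels (the paper packages this as Theorem~\ref{T:PF1}), rewrite $|F(\lambda_n)|^2/K_{EF}(\lambda_n,\lambda_n)$ via the phase identity as $\dfrac{\varphi_E'}{\varphi_E'+\varphi_F'}\cdot\dfrac{1}{K_E(\lambda_n,\lambda_n)}$, and then use $\varphi_F'\lesssim\varphi_E'$ to pin that factor between two positive constants. One small nicety in your write-up: you get the upper inequality from Bessel alone, which already covers the exceptional $\alpha$ for that direction; the paper instead deduces both directions from the equality and postpones the exceptional case entirely.

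Where the two proofs diverge is the exceptional $\alpha$ for the \emph{lower} bound. The paper does not analyze $h_\alpha$ at all: it observes that the constants obtained above are uniform in $\alpha$, lets $\alpha\to\alpha_0$, and passes to the limit by dominated convergence. Your route is more structural---show $h_\alpha\notin\mathcal{I}(\mathcal{H}(E))$, so the one-dimensional defect makes a positive angle with $\mathcal{I}(\mathcal{H}(E))$ and the lower bound survives. Your zero-of-$F$ case is clean. For the zero-free case, ``the bounded-type side conditions'' is the right idea but needs one line of justification: if $F$ is zero-free in $\mathcal{HB}$ then $F^*/F$ is an entire inner function in $\mathbb{C}^+$ with no zeros, hence $F^*/F=e^{i(az+b)}$ with $a>0$ (an entire Pick function is affine), so $F/F^*$ has mean type $a>0$; consequently $(h_\alpha/F)^*/E=e^{-i\alpha}E^*/E-e^{i\alpha}F/F^*$ has positive mean type and $h_\alpha/F\notin\mathcal{H}(E)$. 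With that filled in, your direct argument is a legitimate alternative to the paper's limiting step.
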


\begin{proof}
For $\alpha \in [0, \pi)$, let $\varphi_{EF}(\lambda_{n,\alpha}) = n \pi + \alpha$.  Suppose $\alpha$ is such that the sequence 
\begin{equation} \label{Eq:kernels6}
\left\{ \dfrac{ K_{EF}(\lambda_{n,\alpha}, \cdot)}{\sqrt{K_{EF}( \lambda_{n,\alpha}, \lambda_{n,\alpha}) }} \right\}
\end{equation}
is an orthonormal basis for $\mathcal{H}(EF)$.  Then by Theorem \ref{T:PF1}, the sequence
\[ \left\{ \dfrac{ \overline{F(\lambda_{n,\alpha})} K_{E}(\lambda_{n,\alpha}, \cdot)}{\sqrt{K_{EF}( \lambda_{n,\alpha}, \lambda_{n,\alpha}) }} \right\} \]
is a Parseval frame for $\mathcal{H}(E)$.  Therefore, we have for $f \in \mathcal{H}(E_{0})$:
\begin{align*}
\| f \|_{E_{0}}^{2} \simeq \| f\|_{E}^{2} & = \sum_{n} \left| \left\langle f , \dfrac{ \overline{F(\lambda_{n,\alpha})} K_{E}(\lambda_{n,\alpha}, \cdot)}{\sqrt{K_{EF}( \lambda_{n,\alpha}, \lambda_{n,\alpha}) }} \right\rangle \right|^2 \\
&= \sum_{n} \dfrac{ | F(\lambda_{n, \alpha}) |^2}{ K_{EF}( \lambda_{n,\alpha}, \lambda_{n,\alpha}) } | f(\lambda_{n, \alpha} )|^2 \\
&= \sum_{n} \dfrac{ \pi | f(\lambda_{n, \alpha} )|^2 }{ | E(\lambda_{n, \alpha}) |^2 (\varphi_{E}^{\prime} (\lambda_{n, \alpha}) + \varphi_{F}^{\prime} (\lambda_{n,\alpha}) ) } \\
&\leq \sum_{n} \dfrac{ | f(\lambda_{n, \alpha}) |^2 }{ K_{E}(\lambda_{n, \alpha}, \lambda_{n, \alpha} ) }.
\end{align*}
Therefore, the sequence satisfies the lower frame inequality (with the lower bound independent of the choice of $\alpha$).

Likewise, since $\varphi_{F}^{\prime} \lesssim \varphi_{E}^{\prime}$,
\begin{align*}
\| f \|_{E_{0}}^{2} & \simeq \sum_{n} \dfrac{ \pi | f(\lambda_{n, \alpha} )|^2 }{ | E(\lambda_{n, \alpha}) |^2 (\varphi_{E}^{\prime} (\lambda_{n, \alpha}) + \varphi_{F}^{\prime} (\lambda_{n,\alpha}) ) } \\
&\gtrsim \sum_{n} \dfrac{ \pi | f(\lambda_{n, \alpha} )|^2 }{ | E(\lambda_{n, \alpha}) |^2 (2 \varphi_{E}^{\prime} (\lambda_{n, \alpha})  ) } \\
&\simeq \sum_{n} \dfrac{ | f(\lambda_{n, \alpha}) |^2 }{ K_{E}(\lambda_{n, \alpha}, \lambda_{n, \alpha} ) }.
\end{align*}
Therefore, the sequence satisfies the upper frame inequality (with the upper bound independent of the choice of $\alpha$).

If $\alpha_{0}$ is such that the sequence in Equation (\ref{Eq:kernels6}) is incomplete in $\mathcal{H}(EF)$, then we can apply the Dominated Convergence Theorem to obtain
\[ 
\| f \|_{E_{0}}^2 \simeq \lim_{\alpha \to \alpha_{0}} \sum_{n} \dfrac{ | f(\lambda_{n, \alpha}) |^2 }{ K_{E}(\lambda_{n, \alpha}, \lambda_{n, \alpha} ) } = \sum_{n} \dfrac{ | f(\lambda_{n, \alpha_{0}}) |^2 }{ K_{E}(\lambda_{n, \alpha_{0}}, \lambda_{n, \alpha_{0}} ) }.
\]
\end{proof}

\begin{corollary}
Assume the hypotheses of Theorem \ref{Th:sufficiency}.  If $\{ \lambda_{n} \}$ is the zero set of $EF + E^{*}F^{*}$, then $\{ \lambda_{n} \}$ is a normalized sampling set for $\mathcal{H}(E_{0})$.
\end{corollary}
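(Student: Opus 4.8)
The plan is to deduce the corollary directly from Theorem~\ref{Th:sufficiency} by checking that the hypothesis ``$\{\lambda_n\}$ is the zero set of $EF+E^*F^*$'' is a special case of the hypothesis ``$\varphi_{EF}(\lambda_n)=n\pi+\alpha$ for some $\alpha\in[0,\pi)$.'' So the entire content of the proof is the translation between the zero-set description and the phase-function description of the sampling points.

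First I would recall that $EF\in\mathcal{HB}$ (since $E,F\in\mathcal{HB}$ implies the product is Hermite--Biehler, as is used implicitly throughout Section~3), so $EF$ has an associated phase function $\varphi_{EF}$, and by \eqref{eqn:phase function definition} we may write $(EF)(x)=|(EF)(x)|e^{-i\varphi_{EF}(x)}$ for real $x$. Since $E$ and $F$ have no real roots, neither does $EF$, so $\varphi_{EF}$ is a genuine continuous strictly increasing function on all of $\mathbb{R}$ with no ambiguity at real points. Then I would compute, for real $x$,
\[
(EF)(x)+(EF)^*(x) = (EF)(x)+\overline{(EF)(x)} = 2\,\mathrm{Re}\,(EF)(x) = 2|(EF)(x)|\cos\varphi_{EF}(x).
\]
Since $|(EF)(x)|\neq 0$ for all real $x$, it follows that $x$ is a real zero of $EF+E^*F^*$ if and only if $\cos\varphi_{EF}(x)=0$, i.e. if and only if $\varphi_{EF}(x)\in\frac{\pi}{2}+\pi\mathbb{Z}$. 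Here I am using that the zeros of $EF+E^*F^*$ are all real: this is the standard fact (it appears in the proof of Theorem~\ref{thm:main theorem of E and F} via the identity $G=\tfrac12(EF+E^*F^*)$, where $G$ is real on $\mathbb{R}$ and the construction forces all its zeros onto $\Lambda\subset\mathbb{R}$) that for $E,F\in\mathcal{HB}$ the symmetric combination $EF+E^*F^*$ has only real zeros; alternatively one sees it directly from $|E^*F^*|<|EF|$ off the real axis.

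Next I would conclude that, since $\varphi_{EF}$ is strictly increasing and continuous and $(EF)(x)\neq 0$ everywhere on $\mathbb{R}$, the solution set of $\varphi_{EF}(x)=\frac{\pi}{2}+\pi n$ is a strictly increasing sequence $\{\lambda_n\}_{n\in\mathbb{Z}}$ (with each level $\frac{\pi}{2}+\pi n$ hit exactly once, since strict monotonicity rules out plateaus and the range of $\varphi_{EF}$ exhausts $\mathbb{R}$ when $EF$ has no real zeros), and this sequence is precisely the zero set of $EF+E^*F^*$. Thus the zero set of $EF+E^*F^*$ satisfies $\varphi_{EF}(\lambda_n)=n\pi+\alpha$ with $\alpha=\pi/2\in[0,\pi)$, so all hypotheses of Theorem~\ref{Th:sufficiency} are met (the remaining hypotheses $E_0,E,F$ have no real roots, $\mathcal{H}(E_0)=\mathcal{H}(E)$, and $\varphi_F'\lesssim\varphi_E'$ are inherited verbatim from the corollary's standing assumption ``Assume the hypotheses of Theorem~\ref{Th:sufficiency}''). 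Applying Theorem~\ref{Th:sufficiency} gives that $\{\lambda_n\}$ is a normalized sampling sequence for $\mathcal{H}(E_0)$, which is the claim.

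I do not expect any serious obstacle here: the only point requiring a little care is the reconciliation of indexing conventions (that the zero set of $EF+E^*F^*$ really is a two-sided sequence indexed so that $\varphi_{EF}(\lambda_n)=n\pi+\pi/2$, rather than, say, missing some values of $n$ because $\varphi_{EF}$ has bounded range), and the justification that all zeros of $EF+E^*F^*$ are real so that nothing is lost in passing from ``zero set'' to ``solutions of the phase equation on $\mathbb{R}$.'' Both are standard in de Branges theory and already implicit in the paper's earlier use of the identity $G=\tfrac12(EF+E^*F^*)$, so the corollary is essentially an immediate specialization of Theorem~\ref{Th:sufficiency}.
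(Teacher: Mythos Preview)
Your proposal is correct and follows exactly the paper's approach: the paper's entire proof is the single assertion that the zero set of $EF+E^*F^*$ coincides with $\{\lambda_{n,\pi/2}\}$, and you have simply unpacked that assertion via the computation $(EF)(x)+(EF)^*(x)=2|(EF)(x)|\cos\varphi_{EF}(x)$. Your additional remarks about the reality of the zeros and the indexing are reasonable care beyond what the paper writes down, but they do not represent a different route.
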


\begin{proof}
The zero set for $EF + E^{*} F^{*}$ coincides with $\{ \lambda_{n, \frac{\pi}{2}} \}$.
\end{proof}

\begin{corollary}
Assume the hypotheses of Theorem \ref{Th:sufficiency}.  Assume also that $K_{E_{0}}(x, x) \simeq 1$.  Then the zero set of $EF + E^{*}F^{*}$ is a (non-normalized) sampling sequence for $\mathcal{H}(E_{0})$.
\end{corollary}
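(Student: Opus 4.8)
The plan is to read this off from the preceding corollary together with a trivial renormalization. Under the hypotheses of Theorem \ref{Th:sufficiency}, the preceding corollary already asserts that the zero set $\Lambda = \{\lambda_n\}$ of $EF + E^{*}F^{*}$ is a \emph{normalized} sampling sequence for $\mathcal{H}(E_{0})$; that is, there exist $\widetilde{A}, \widetilde{B} > 0$ with
\[ \widetilde{A}\,\|f\|_{E_{0}}^{2} \;\le\; \sum_{n} \frac{|f(\lambda_{n})|^{2}}{K_{E_{0}}(\lambda_{n},\lambda_{n})} \;\le\; \widetilde{B}\,\|f\|_{E_{0}}^{2} \]
for all $f \in \mathcal{H}(E_{0})$. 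So the only thing left to do is to remove the normalizing weights $1/K_{E_{0}}(\lambda_{n},\lambda_{n})$ from the middle sum, and this is exactly what the extra hypothesis $K_{E_{0}}(x,x) \simeq 1$ permits.

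Concretely, I would first record that $K_{E_{0}}(x,x) \simeq 1$ means there are constants $0 < c \le C < \infty$ with $c \le K_{E_{0}}(x,x) \le C$ for every $x \in \mathbb{R}$, hence in particular $c \le K_{E_{0}}(\lambda_{n},\lambda_{n}) \le C$ for all $n$. (It is worth noting for internal consistency that since $\mathcal{H}(E_{0}) = \mathcal{H}(E)$ with equivalent norms, one has $K_{E_{0}}(x,x) \simeq K_{E}(x,x)$, so this hypothesis is equivalent to $K_{E}(x,x) \simeq 1$, which is the quantity actually appearing at the end of the proof of Theorem \ref{Th:sufficiency}.) Then for every $f \in \mathcal{H}(E_{0})$ the elementary two-sided bound
\[ \frac{1}{C}\sum_{n}|f(\lambda_{n})|^{2} \;\le\; \sum_{n}\frac{|f(\lambda_{n})|^{2}}{K_{E_{0}}(\lambda_{n},\lambda_{n})} \;\le\; \frac{1}{c}\sum_{n}|f(\lambda_{n})|^{2} \]
combined with the normalized sampling inequality above yields
\[ c\,\widetilde{A}\,\|f\|_{E_{0}}^{2} \;\le\; \sum_{n}|f(\lambda_{n})|^{2} \;\le\; C\,\widetilde{B}\,\|f\|_{E_{0}}^{2}, \]
which is precisely Equation (\ref{Eq:sampling1}) with $A = c\widetilde{A}$ and $B = C\widetilde{B}$. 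Thus $\Lambda$ is a (non-normalized) sampling sequence for $\mathcal{H}(E_{0})$.

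I do not expect any genuine obstacle here: the substance is entirely contained in the preceding corollary and in Theorem \ref{Th:sufficiency}, and the present statement is the observation --- already flagged in the remark following the definitions of (normalized) sampling sequences --- that when $K_{E}(x,x) \simeq 1$ the inequalities (\ref{Eq:sampling1}) and (\ref{Eq:sampling2}) are equivalent. The only minor point requiring a word of care is the transfer of the diagonal kernel bound between $E_{0}$ and $E$ via the norm equivalence $\mathcal{H}(E_{0}) = \mathcal{H}(E)$, which is routine.
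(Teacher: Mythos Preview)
Your proof is correct and follows essentially the same route as the paper: invoke the preceding corollary to get the normalized sampling inequality, then use the uniform bound on the diagonal kernel to strip off the weights. The paper phrases the kernel transfer $K_{E_{0}}(x,x)\simeq K_{E}(x,x)$ as a separate lemma (your parenthetical remark), but the logic is identical.
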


\begin{proof}
This follows from Lemma \ref{lem:equivalent kernel functions} below.  Indeed, if $K_{E}(x, x) \simeq 1$, we have 
\[ \| f \|_{E_{0}}^{2} \simeq \sum_{n} \dfrac{ | f(\lambda_{n, \alpha}) |^2 }{ K_{E}(\lambda_{n, \alpha}, \lambda_{n, \alpha} ) } \simeq \sum_{n} | f(\lambda_{n, \alpha}) |^2. \]
\end{proof}

 \begin{lem}\label{lem:equivalent kernel functions}
For $E_{1}, E_{2} \in \mathcal{HB}$, if $H(E_{1})\simeq H(E_{2})$, then $K_{1}(x,x)\simeq K_{2}(x,x)$ for all $x\in \mathbb{R}$, where $K_{1},K_{2}$ are the reproducing kernels of $H(E_{1})$ and $H(E_{2})$, respectively.
 \end{lem}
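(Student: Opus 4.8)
The plan is to use the standard variational description of the diagonal of a reproducing kernel together with the hypothesized norm equivalence. Write $K_i$ for the reproducing kernel of $H(E_i)$, $i=1,2$. First I would recall that, by property (H2) and the fact (noted after Equation~(\ref{eq:reproducingkernelProperty})) that $K_E$ extends to real arguments, $K_i(x,\cdot)\in H(E_i)$ and $f(x)=\langle f,K_i(x,\cdot)\rangle_{E_i}$ for every $f\in H(E_i)$ and every $x\in\mathbb{R}$. Applying Cauchy--Schwarz and observing that equality is attained at $f=K_i(x,\cdot)$ gives the identity
\[
K_i(x,x)=\sup\bigl\{\,|f(x)|^2 : f\in H(E_i),\ \|f\|_{E_i}\le 1\,\bigr\}.
\]

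Next I would unwind the hypothesis $H(E_1)\simeq H(E_2)$: the two spaces coincide as sets of entire functions and there are constants $0<c\le C<\infty$ with $c\,\|f\|_{E_1}\le\|f\|_{E_2}\le C\,\|f\|_{E_1}$ for every $f$ in the common space. Hence the unit ball of $H(E_2)$ is contained in the ball of radius $1/c$ of $H(E_1)$, and the ball of radius $1/C$ of $H(E_1)$ is contained in the unit ball of $H(E_2)$. Taking the supremum of $|f(x)|^2$ over these nested balls and invoking the displayed formula for each kernel yields
\[
\frac{1}{C^2}\,K_1(x,x)\ \le\ K_2(x,x)\ \le\ \frac{1}{c^2}\,K_1(x,x)
\]
for all $x\in\mathbb{R}$ (indeed for all $x\in\mathbb{C}$). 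Since $c$ and $C$ are the norm-equivalence constants and do not depend on $x$, this is precisely the assertion $K_1(x,x)\simeq K_2(x,x)$.

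I do not expect a genuine obstacle here: the argument is purely functional-analytic, and the only place the de Branges structure enters is to guarantee a bounded point evaluation---hence a reproducing kernel---at real points, which is exactly property (H2) together with the extension of $K_E$ to real $w$ recorded earlier. The same computation in fact shows the more general statement that if two reproducing kernel Hilbert spaces on a common set agree as sets with equivalent norms, then their diagonal kernels are comparable with the norm-equivalence constants; I would phrase the proof so that this generality is transparent even though only the real-variable case is needed in the corollary above.
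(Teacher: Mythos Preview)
Your argument is correct and is essentially the same as the paper's: both proofs rest on the variational identity $K_i(x,x)=\sup\{|f(x)|^2:\|f\|_{E_i}\le 1\}$ together with the comparison of unit balls coming from the norm equivalence $H(E_1)\simeq H(E_2)$. Your version is a bit more explicit about the constants and the Cauchy--Schwarz justification of the variational formula, but the strategy is identical.
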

\begin{proof} First note that by the reproducing kernel property (\ref{eq:reproducingkernelProperty}), $\|K_{1}(x,.)\|_{H(E_{1})}^{2}=K_{1}(x,x)$, and $\|K_{2}(x,.)\|_{H(E_{2})}^{2}=K_{2}(x,x)$  for $x\in \mathbb{R}$. On the other hand,
   \begin{eqnarray*}
    \|K_{1}\|_{H(E_{1})} 
                        &=& \sup\limits_{\substack {f\in H(E_{1})\\\|f\|_{H(E_{1})}=1}}|f(z)|\\
                        &\simeq& \sup\limits_{\substack {f\in H(E_{2})\\\|f\|_{H(E_{2})}=1}} |f(z)|\\
                        &=&\|K_{2}\|_{H(E_{2})}
\end{eqnarray*}
hence, $K_{1}(x,x)\simeq K_{2}(x,x)$ for all $x\in \mathbb{R}$.
 \end{proof}


\begin{remark}
We note that the theorem proven by Ortega-Cerda and Seip for the Paley-Wiener space automatically satisfies the condition $K_{E_{0}}(x, x) \simeq 1$.  Also, they do not require the condition $\varphi_{F}^{\prime} \lesssim \varphi_{E}^{\prime}$, again because the Plancherel-Polya inequality holds in the Paley-Wiener space, and so the upper frame bound is satisfied (they assume a priori that the sequences are separated).
\end{remark}


\providecommand{\bysame}{\leavevmode\hbox to3em{\hrulefill}\thinspace}
\providecommand{\MR}{\relax\ifhmode\unskip\space\fi MR }
\providecommand{\MRhref}[2]{%
  \href{http://www.ams.org/mathscinet-getitem?mr=#1}{#2}
}
\providecommand{\href}[2]{#2}

\end{document}